\newcommand{\rs}{\Sigma^-}
\newcommand{\pihat}{\hat{\pi}}
\newcommand{\tent}{\Lambda}
\newcommand{\st}{\rho}
\renewcommand{\S}{\mathcal S}
\newcommand{\nn}[1]{\left\|#1\right\|^-}
\newcommand{\Allow}{\mathcal A}
\newcommand\C{\mathcal C}
\newcommand\hp{\hat{\pi}}
\newcommand\W{\mathcal W}
\newcommand\PP{\mathcal P}
\newcommand\lex{<_\mathrm{lex}}
\newcommand\I{\mathcal I}
\renewcommand{\S}{\mathcal{S}}
\DeclareMathOperator{\Pat}{Pat}
\theoremstyle{plain}
\newtheorem{theorem}{Theorem}[section]
\newtheorem{lemma}[theorem]{Lemma}
\newtheorem{corollary}[theorem]{Corollary}
\newtheorem{conjecture}[theorem]{Conjecture}
\theoremstyle{definition}
\newtheorem{sample result}[theorem]{Sample Result}
\newtheorem{example}[theorem]{Example}
\title{Characterization of the allowed patterns of signed shifts}
\author{Kassie Archer}
\date{}
\begin{document}
\maketitle

\begin{abstract}
	The allowed patterns of a map are those permutations in the same relative order as the initial segments of orbits realized by the map. In this paper, we characterize and provide enumerative bounds for the allowed patterns of signed shifts, a family of maps on infinite words. 
\end{abstract}

\section{Introduction and Background}

Signed shifts are a family of maps on infinite words which generalize well-known maps such as the tent map and the left shift. The problem of characterizing the permutations which are realized by these maps have been studied in several papers including \cite{Amigosigned, Amigobook, AEK, Elishifts}. In \cite{Amigosigned}, the author presents a partial characterization of these permutations, called allowed patterns. In this paper, we show that the conditions presented in \cite{Amigosigned} are not sufficient for the permutations to be allowed and present a complete characterization of the allowed patterns of signed shifts. In Section \ref{sec:enum allowed}, we additionally provide bounds on the number of allowed patterns of size $n$ for each signed shift.

\subsection{Permutations}
We denote by $\S_n$ the set of permutations of $[n] = \{1, 2, \ldots, n\}$. We write permutations in one-line notation as $\pi=\pi_1\pi_2\dots\pi_n\in\S_n$. Occasionally, we will write a permutation in cycle notation as a product of disjoint cycles. A {\it cyclic permutation}, or {\it cycle}, is a permutation $\pi \in \S_n$ which is composed of a single $n$-cycle. We denote the set of cyclic permutations of length $n$ by $\C_n$. For example, the permutation $\pi = 37512864 = (13527684)$ is a cyclic permutation in $\C_8$ written in both its one-line notation and cycle notation.

%\todo{I may want to define the $*$ stuff here too}

It will be useful to define the map
$$\begin{array}{cccc}  \S_n & \to& \C_n\\
\pi &\mapsto &\hp,
\end{array}$$
where if $\pi = \pi_1\pi_2\dots \pi_n$ in one-line notation, then $\hp=(\pi_1,\pi_2,\dots,\pi_n)$ in cycle notation, that is, $\hp$ is the cyclic permutation that
sends $\pi_1$ to $\pi_2$, $\pi_2$ to $\pi_3$, and so on. Writing $\hp=\hp_1\hp_2\dots\hp_n$ in one-line notation, we have that $\hp_{\pi_i}=\pi_{i+1}$ for $1\le i\le n$, with the convention that $\pi_{n+1}:=\pi_1$.
The map $\pi\mapsto\hp$ also appears in~\cite{Elishifts}. 
For example, if $\pi = 17234856$, then $\pihat = (17234856) = 73486125$.

%Note that all the permutations in the set $$[\pi] = \{\pi_i\pi_{i+1}\ldots \pi_n\pi_1\ldots \pi_{i-1} : i \in [n]\}$$ of cyclic rotations of $\pi$ are mapped to the same element $\hp\in\C_n$.
%Thus, letting $\Sbar_n=\{[\pi]:\pi\in \S_n\}$, this map induces a bijection $\theta$ between $\Sbar_n$ and $\C_n$, defined by $\theta([\pi])=\hp$. The map $\theta$ will play an important role in Chapter \ref{Ch:PP} and a variant of this map will play a similar role in Chapter \ref{Ch:AP}.

%
%We denote the {\it descent set} of $\pi\in\S_n$ by $\Des(\pi)=\{i\in[n-1]:\pi_i>\pi_{i+1}\},$ and by $\des(\pi)=|\Des(\pi)|$ the number of descents of $\pi$. %Similarly, we denote the {\it ascent set} of $\pi\in\S_n$ by $\Asc(\pi) = \{i \in [n-1]:\pi_i<\pi_{i+1}\},$ and by $\asc(\pi) = |\Asc(\pi)|$ the number of ascents of $\pi$. Notice that if follows from the definition that $\Asc(\pi) = [n-1] \setminus\Des(\pi)$.
%For example, the permutation $\pi = 47238165$ has descent set $\Des(\pi) = \{2,5,7\}$ and thus $\des(\pi) =3$. 

% \todo{do I need to define descents?}

\subsection{Allowed patterns}
Let $X$ be a linearly ordered set and $x_1, x_2, \ldots, x_n \in X$ be distinct. Then we can define the {\it reduction operation} by $$\st(x_1,x_2, \ldots, x_n) = \pi$$ where $\pi = \pi_1\pi_2\ldots \pi_n$ is the permutation of $[n]$, written in one-line notation, whose entries are in the same relative order as the $n$ entries in the input. For example, $\st(3.3,3.7,9,6,0.2)=23541$. 

Consider a map $f: X \to X$. Iterating this map $f$ at a point $x\in X$ returns a sequence of elements from $X$ called the {\it orbit} of $x$ with respect to $f$: $$x, f(x), f^2(x), \ldots.$$ If there are no repetitions among the first $n$ elements of the orbit, then we define the {\it pattern} of $x$ with respect to $f$ of length $n$ to be 
$$\Pat(x, f, n) = \st(x, f(x), f^2(x), \dots, f^{n-1}(x)).$$ 
If $f^i(x)=f^j(x)$ for some $0\le i<j<n$, then $\Pat(x, f, n)$ is not defined. 
The set of {\em allowed patterns} of $f$ is the set of permutations which are realized by $f$ in this way:
$$\Allow(f)=\{\Pat(x,f,n):n\ge0,x\in X\}.$$ We denote by $\Allow_n(f)$ the allowed patterns of length $n$. Permutations which are not allowed patterns are called the {\it forbidden patterns} of $f$. 

For example, consider the logistic map $L$ on the unit interval defined by $L(x) = 4x(1-x).$ Then the pattern at $x = .3000$ of length 3 with respect to $L$ is the permutation $132$ since the first 3 elements of the orbit $.3000,.8400, .5376$ are in the same relative order as $132$. The allowed patterns of $L$ of length $3$ are $\Allow_3(f) = \{123, 132, 213, 231, 312\}$ and the forbidden patterns of $L$ of length 3 are $\S_3 \setminus \Allow_3(L) = \{321\}$, since there is no $x\in[0,1]$ so that $x,L(x), L(L(x))$ is in decreasing order. 
%Include picture??

%\todo{Insert picture of Logistic Map and its allowed intervals of length 3?}
The set of allowed patterns is closed under consecutive pattern containment \cite{Elishifts} and the minimal forbidden patterns form a basis for the allowed patterns. These minimal forbidden patterns have been studied for various maps including logistic maps and signed shifts \cite{Eliu,Amigosigned}. 

It is known that if $f$ is a piecewise monotone map on the unit interval, then the size of $\Allow_n(f)$ grows at most exponentially \cite{BKP}, while the number of permutations grows super-exponentially and thus, $f$ will have forbidden patterns. The existence of forbidden patterns allows one to distinguish a random time series from a deterministic one \cite{AZS,AZS2}, since a random time series will eventually contain all patterns, while most patterns are forbidden in a deterministic time series. In addition, the size of $|\Allow_n(f)|$ for a given $f$ is known to be directly related to the topological entropy of $f$, a value which measures the complexity of the map \cite{BKP}. 

%\todo{more detail here? Especially about the allowed patterns being closed under consec. pattern containment and what is a minimal forbidden pattern.  see Sergi's grant proposal. Other citations?}

For these reasons, characterizing and enumerating the allowed and minimal forbidden patterns of a given map $f$ presents an interesting problem. Moreover, studying these ideas has also led to purely combinatorial results \cite{AE, Elishifts, Elicyc}. Previously, the question of characterizing and enumerating allowed patterns has been answered for the well-known left shift (called the $k$-shift) on words in \cite{Elishifts} and for $\beta$-shifts in \cite{Elibeta}. In this paper, we provide a characterization of the allowed patterns for the family of maps which called signed shifts, which generalize the $k$-shift and the well-known tent map, as well as bounds on the enumeration of these patterns. Though we do not approach the question of characterizing the forbidden (or minimal forbidden) patterns of signed shifts, this could be an interesting question for future study.

\subsection{Signed shifts}
\label{Sec:SS}

%We will be focusing on characterizing and enumerating the allowed patterns of signed shifts, a family of chaotic maps on infinite words. %These maps generalize important maps like the left shift and the tent map, both well-studied dynamical systems. 
%Because these maps are chaotic, they have many periodic orbits of every length which makes the study of their periodic patterns interesting. Additionally, their discrete structure makes them quite accessible for study from a combinatorial perspective.

Let $k\ge2$ be fixed, and let $\W_k$ be the set of infinite words $s=s_1s_2\dots$ over the alphabet $\{0,1,\dots,k-1\}$. Let $\lex$ denote the lexicographic order on these words.
We use the notation $s_{[i,\infty)}=s_is_{i+1}\dots$, and $\bar{s_i}=k-1-s_i$. If $q$ is a finite word, $q^m$ denotes concatenation of $q$ with itself $m$ times, and $q^\infty$ is an {\it infinite periodic word}, defined as a word $s:=q^\infty$ so that $s = s_{[r+1,\infty)}$ where $|q| = r$.

Let the \textit{signature} of a signed shift be defined as $\sigma=\sigma_0\sigma_1\dots\sigma_{k-1} \in \{+,-\}^k$ and let $T^+_\sigma = \{t :  \sigma_t = +\}$ and $T^-_\sigma=\{ t  :  \sigma_t = -\}$. Note that $T^+_\sigma$ and $T^-_\sigma$ form a set partition of $\{0,1,\dots,k-1\}$. For example, if the signature of a signed shift is $\sigma = ++-+$, then $T^+_\sigma = \{0,1,3\}$ and $T^-_\sigma = \{2\}$. 
We give two definitions of the signed shift with signature $\sigma$, and show that they are order-isomorphic to each other. The first definition is the one commonly used in the literature and the second (equivalent) definition will be more convenient for studying the patterns realized by signed shifts. 

The first definition, which we denote by $\Sigma'_\sigma$, is the map $\Sigma'_\sigma:(\W_k,\lex)\to(\W_k,\lex)$ defined by
$$\Sigma'_\sigma(s_1s_2s_3s_4\dots)=\begin{cases} s_2s_3s_4\dots & \mbox{if }s_1\in T^+_\sigma, \\
\bar{s_2}\bar{s_3}\bar{s_4}\dots & \mbox{if }s_1\in T^-_\sigma. \end{cases}$$
For an example of a pattern under this map, let $\sigma = +--$ and $s = 001102012211\ldots$. The the pattern of $s$ with respect to the map $\Sigma_\sigma'$ of length 8 will be $\st(s, \Sigma'_\sigma(s), \Sigma'^2_\sigma(s), \ldots, \Sigma'^7_\sigma(s))$ where the relative order of the words is determined by the lexicographical ordering.
Thus, $\Pat(s, \Sigma'_\sigma,8) = 12453786.$

The order-preserving transformation
$$\begin{array}{cccc}  \phi_k:&(\W_k,\lex) &\to&([0,1],<) \\
&s_1s_2s_3s_4 \dots &\mapsto& \sum_{i\ge0}  s_i k^{-i-1}
\end{array}$$
can be used to show (see~\cite{Amigosigned}) that $\Sigma'_\sigma$ is order-isomorphic to the piecewise linear function, called the \textit{signed sawtooth map} with signature $\sigma$, $M_\sigma:[0,1]\to[0,1]$ defined
for $x \in [\frac{t}{k}, \frac{t+1}{k})$, for each $0\le t\le k-1$, as
$$M_\sigma(x)=\begin{cases} kx -t & \mbox{if } t \in T^+_\sigma, \\ t+1-kx & \mbox{if }t \in T^-_\sigma.\end{cases}$$
As a consequence, the allowed and forbidden patterns of $\Sigma'_\sigma$ are the same as those of $M_\sigma$, respectively.
A few examples of the graphs of the function $M_\sigma$ for different $\sigma$ are pictured in Figure~\ref{fig:signedshifts}.

\begin{figure}[ht]
\centering
   \includegraphics[width=.23\linewidth]{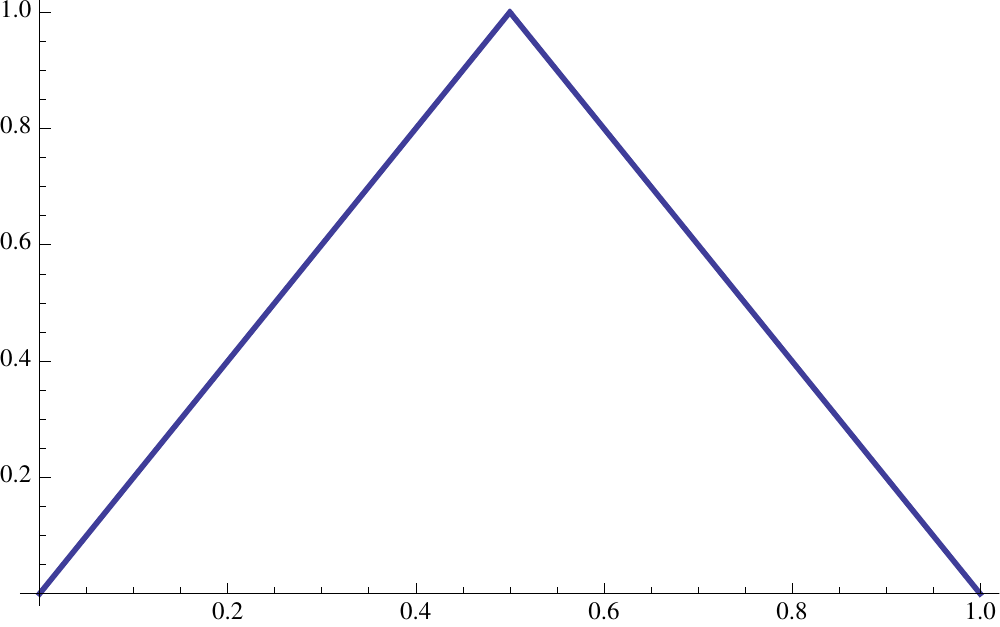}\hspace{.01\linewidth}
    \includegraphics[width=.23\linewidth]{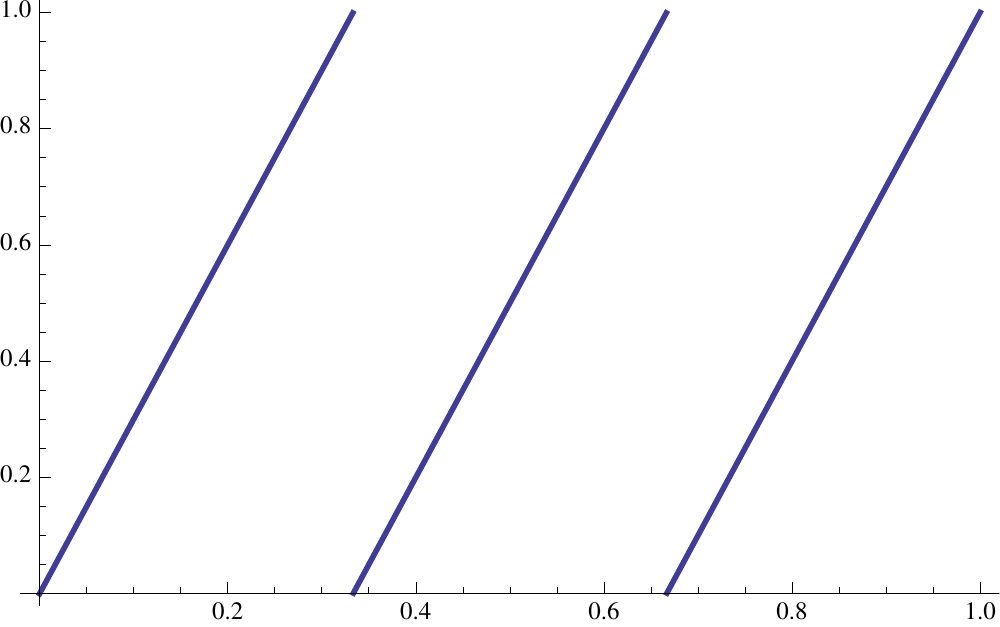}\hspace{.01\linewidth}
    \includegraphics[width=.23 \linewidth]{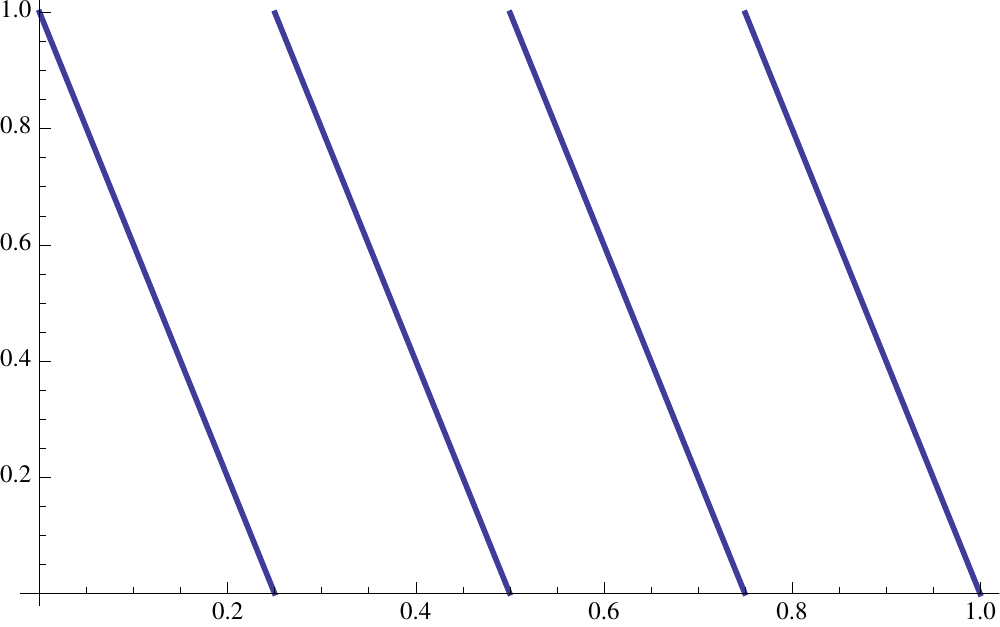}\hspace{.01\linewidth}
    \includegraphics[width=.23\linewidth]{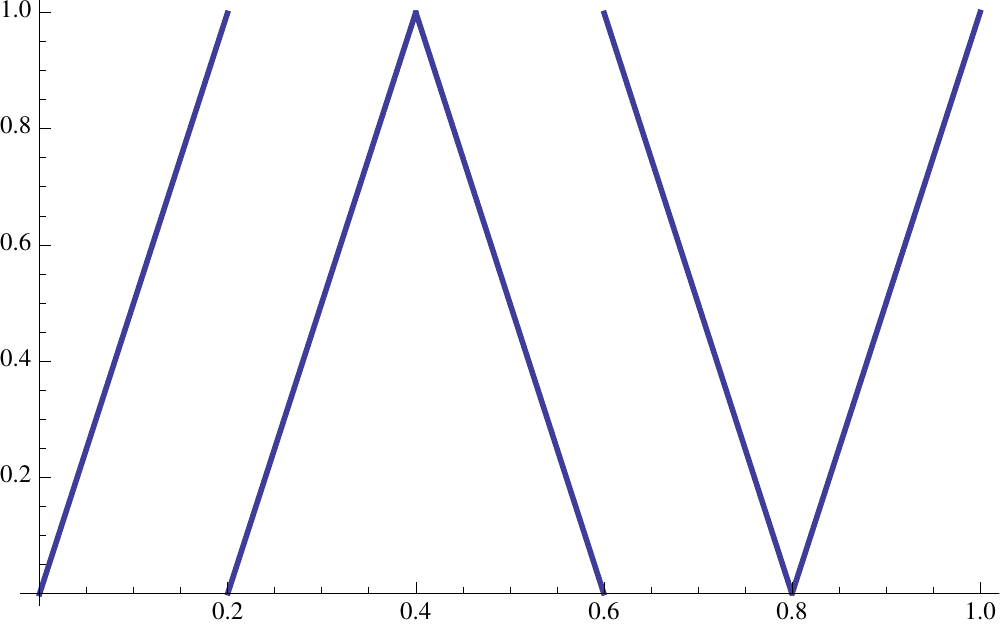}
\caption{The graphs of $M_\sigma$ for $\sigma =+-$, $\sigma =+++$, $\sigma =----$ and $\sigma = + + - - +$, respectively.}
\label{fig:signedshifts}
\end{figure}

We next give another definition of the signed shift that will be used in this paper and which also appeared in \cite{AE}. 
Let $\prec_\sigma$ be the linear order on $\W_k$ defined by $s= s_1s_2s_3\dots\prec_\sigma t_1t_2t_3\dots = t$ if one of the following holds:
\begin{enumerate}
\item $s_1 < t_1$,
\item $s_1 = t_1 \in T^+_\sigma$ and $s_2s_3\ldots\prec_\sigma t_2t_3\ldots,$ or
\item $s_1 = t_1 \in T^-_\sigma$ and $t_2t_3\ldots\prec_\sigma s_2s_3\ldots.$
\end{enumerate}
Equivalently, $s\prec_\sigma t$ if, letting $j\ge1$ be the smallest such that $s_j \neq t_j$, either
$c:=|\{1\le i<j: s_i\in T^-_\sigma\}|$ is even and $s_j<t_j$, or $c$ is odd and $s_j>t_j$.
The signed shift is the map $\Sigma_\sigma:(\W_k,\prec_\sigma)\to (\W_k,\prec_\sigma)$ defined simply by $\Sigma_\sigma(s_1s_2s_3s_4\dots)=s_2s_3s_4\dots$.

For example, let $\sigma = +--$ and consider the periodic point $s = 00110221001\ldots$.Using the ordering defined by $\prec_\sigma$, we can find that $\Pat(s,\Sigma_\sigma,8)=12453786$. %This point $s$ is actually the equivalent point for $\Sigma_\sigma$ as the point in Example \ref{ex:Sigma'} for $\Sigma'_\sigma$. The explicit relationship between these two maps is described below. 

To show that the two definitions of the signed shift as $\Sigma_\sigma$ and $\Sigma'_\sigma$ are order-isomorphic, consider the order-preserving bijection $\psi_\sigma:(\W_k,\prec_\sigma)\to(\W_k,\lex)$ that maps a word $s=s_1s_2s_3\dots$ to the word $a = a_1a_2a_3\dots$ where
$$a_i = \begin{cases} s_i & \mbox{if }|\{j< i : s_j\in T^-_\sigma\}| \mbox{ is even,} \\ \bar{s_i} & \mbox{if }|\{j< i : s_j\in T^-_\sigma\}| \mbox{ is odd.} \end{cases}$$
It is easy to check that $\psi_\sigma\circ\Sigma_\sigma = \Sigma_\sigma'\circ\psi_\sigma$, and thus $\Allow(\Sigma_\sigma) = \Allow(\Sigma_\sigma')$. %Using the definition of the signed shift as $\Sigma_\sigma$, it is clear that its $n$-periodic points are the periodic words in $\W_k$ with period $n$, that is, words of the form
%$$s = (s_1s_2s_3\dots s_n)^\infty$$ where $s_1s_2\dots s_n$ is {\em primitive} (sometimes called {\em aperiodic}), that is, not a concatenation of copies of a strictly shorter word. Notice that this was not the case for $\Sigma'_\sigma$, as we saw in Example \ref{ex:Sigma'}.
%We denote by $\W_{k,n}$ the set of periodic words in $\W_k$ with period $n$, and by $\WW_{k,n}$ the set of $n$-periodic orbits, where each orbit consists of the $n$ shifts of an $n$-periodic word. 

%We will pay special attention to the following special cases of $\Sigma_\sigma$ in this thesis. 
If $\sigma = +^k$, then $\prec_\sigma$ is the lexicographic order $\lex$, and $\Sigma_\sigma$ is called the {\em $k$-shift} and is typically denoted by $\Sigma_k$.
When $\sigma=-^k$, the map $\Sigma_\sigma$ is called the {\em reverse $k$-shift} and is denoted here by $\rs_k$. When $\sigma=+-$, the map $\Sigma_\sigma$ is the well-known {\em tent map} and is denoted by $\tent$.

\subsection{Finite words and infinite periodic words}

For the proofs, we may need to refer to finite words at time. A finite word $q = q_1q_2\ldots q_n$ is an initial segment of length $n$ of an infinite word. The finite word $q^m$ is the word of length $nm$ which is the concatenation of $q$ with itself $m$ times. For example, if $q = 22010$, then $q^2 = 2201022010$. The infinite word $q^\infty$ is the concatenation of $q$ with itself an infinite number of times.

A finite word $q$ is called \textit{primitive} if there is no way to write $q$ as $q = r^m$ for any strictly shorter finite word $r$ and any $m>1$. If $q = q_1q_2 \ldots q_n$ is primitive, then the infinite word $q^\infty$ is an $n$-periodic word. Notice that under $n$ iterations of $\Sigma_\sigma$, we obtain $\Sigma_\sigma^n(q^\infty) = q^\infty$. An infinite word is called \textit{eventually periodic} if after removing some initial finite segment of the word, we are left with a periodic word. For example, $s = 00010101010101\ldots$ is eventually periodic since $s = 00(01)^\infty$. 

Additionally, for an infinite word $s = s_1s_2\ldots$, the notation $s_{[i,j]}$ for $i<j$ will be used to denote the finite word $s_is_{i+1}\ldots s_j$ and the notation $s_{[i,\infty)}$ will denote the infinite word $s_is_{i+1}\ldots$.

%In this chapter, we return to the idea of allowed patterns of a dynamical system (as opposed to the periodic patterns from the previous two chapters). In Section \ref{sec:desc allowed}, we discuss what progress had previously been made towards characterizing the allowed patterns of signed shifts and state a theorem (Theorem \ref{thm:desc allowed}) which completely characterizes these patterns. In Section \ref{sec:enum allowed}, we find bounds on the size of the set of allowed patterns of $\Sigma_\sigma$ for a general $\sigma$ and address a few special cases where more can be said.

\section{Characterization of allowed patterns of signed shifts} \label{sec:desc allowed}

%\subsection{Background}

Some progress toward characterizing the permutations of $\Allow_n(\Sigma_\sigma)$ has been made. In \cite[Theorem 4.1]{Amigosigned}, the author gives necessary conditions (described below) for a permutation to be an allowed pattern of $\Sigma_\sigma$ and posits that these conditions are also sufficient. However, as demonstrated by the counterexamples which follow, the conditions in \cite{Amigosigned} are not enough to guarantee that a permutation is in $\Allow_n(\Sigma_\sigma)$. %Additionally, for a given permutation $\pi \in \S_n$, it is not easy to check whether $\pi \in \Allow_n(\Sigma_\sigma)$ if the word $s$ for which $\pi = \Pat(s, \Sigma_\sigma, n)$ is not also given (since the conditions depend on $s$). We state the theorem here in full for reference.  

In this section, we will present some convenient notation and state the necessary and sufficient conditions a permutation must satisfy to be an allowed pattern of the signed shift $\Sigma_\sigma$. 

\subsection{Definitions and notation}
For a finite word $q = q_1\cdots q_n$ on letters $\{0,1,2,\ldots, k-1\}$, we denote by $d_i(q) := |\{j : q_j < i\}|$, so that $d_0(q) = 0$, $d_1(q)$ is the number of 0's in $q$, $d_2(q)$ is the number of 0's and 1's, etc.

Let $\S_n^*$ be the set of $*$-permutations, that is permutations where one element has been replaced by a $*$. We denote the elements of $\S_n^*$ by $\pi^*$. We denote by $\C_n^*$ the set of $*$-permutations $\pi^*$ so that the permutation $\pi$ obtains by replacing the $*$ by its missing element is cyclic.
\begin{example}
The element $\pi^* = 36582{*}179 \in \S_9^*$ since the element $4$ in the permutation $\pi =  365824179$ has been replaced with $*$. 
\end{example}
\begin{example}
The element $\pi^* =47861{*}52 \in \S_8^*$ since the element $3$ in the permutation $\pi =  47861352 $ has been replaced with $*$. Also, since $\pi$ is a cyclic permutation, $\pi^* \in \C_8^*$.
\end{example}
Recall that for a given $\pi = \pi_1\cdots \pi_n$, there is a cyclic permutation $\pihat = (\pi_1\ldots \pi_n) = \hp_1\ldots \hp_n$. 
We will define a map \begin{align*}\S_n &\to \C_n^*\\ \pi & \mapsto \pihat^*,\end{align*} where $\pihat^*$is obtained by replacing the entry $\pi_1$ in $\pihat$ with $*$. 

\begin{example}
Let $\pi = 834192675$. Then $\pihat = (834192675) = 964187532$. Since $\pi_1 = 8$, we replace the 8 by a $*$ to get $\pihat^* = 9641{*}7532$. 
\end{example}

Recall that $\tau \in \S^\sigma$ if that there is some {\em $\sigma$-segmentation} of $\pihat$, $0 = e_0\leq \cdots\leq e_k = n$ so that $\tau_{e_t+1} \cdots \tau_{e_t}$ is increasing if $\sigma_t = +$ and decreasing if $\sigma_t = -$. Similarly, we will say that $\tau^*\in\S^{\sigma,*}$ if there is a {\em $*$-$\sigma$-segmentation} of $\tau^*$. The definition in this case is somewhat more complicated since we require extra conditions which depend on the location of the $*$. We define a $*$-$\sigma$-segmentation of $\tau^*$ to be $0 = e_0 \leq e_1\leq \cdots \leq e_k = n$ such that the following properties hold: 
\begin{enumerate}[(a.)]
\item the sequence $\tau^*_{e_t+1} \cdots \tau^*_{e_{t+1}}$ is increasing if $\sigma_t = +$ and decreasing if $\sigma_t = -$.
\item if $\sigma_0 = +$ and $\tau^*_1 \tau^*_2 = *1$, then $e_1 \leq 1$.
\item if $\sigma_{k-1} = +$ and $\tau^*_{n-1} \tau^*_{n} = n*$, then $e_{k-1} \geq n-1$.
\item if $\sigma_0 = \sigma_{k-1} = -$ and both $\tau^*_1 = n$ and $\tau^*_{n-1} \tau^*_{n} = 1*$, then either $e_1=0$ or $e_{k-1} \geq n-1$.
\item if $\sigma_0 = \sigma_{k-1} = -$ and both $\tau^*_n = 1$ and $\tau^*_{1} \tau^*_{2} = *n$, then either $e_{k-1} = n$ or $e_{1} \leq 1$.
\end{enumerate}

\begin{example}\label{ex:cn1}
Suppose $\sigma = +-$. The permutation $\tau^* = 467893{*}21 \in \C_9^*$ has two $*$-$\sigma$-segmentations. Either of $0\leq 4 \leq 9$ or $0\leq 5\leq 9$ satisfy all of the necessary conditions. 
\end{example}
\begin{example}\label{ex:cn3}
Suppose $\sigma = --$. The permutation $\tau^* = 754261{*} \in \C_7^*$ does not have a $*$-$\sigma$-segmentation. In order to satisfy condition (a), it must have the $\sigma$-segmentation $0\leq 4 \leq 7$. However, by condition (d), we must have that $e_1 = 0$, $6$, or $7$, which is not the case.
\end{example}

In \cite{Amigosigned}, the author states the claim (in equivalent terms) that in order for $\pi$ to be an allowed pattern of $\Sigma_\sigma$, it is sufficient for $\tau^* = \hat{\pi}^*$ to satisfy condition (a) above. Here we present two counterexamples, one of which is taken care of by the extra conditions (b)--(e). The other counterexample shows why an extra condition will be necessary for our main theorem.

\begin{example}\label{ex:count1}
For $\sigma = +-$, the permutation $\pi = 591482637$ satisfies the condition (a). However, the word $s$ so that $\pi = \Pat(s_1s_2\cdots, \Sigma_\sigma, 9)$ is forced (by the conditions of the theorem) to starts with either $$s_1s_2\cdots s_9 = 010010101$$ or $$s_1s_2\cdots s_9 = 110010101.$$ Extending this to an infinite word, we are forced to have $$s_1s_2\cdots = 010010101(01)^\infty \mbox { or } 110010101(01)^\infty .$$ But the patterns of length 9 for these words are undefined. Indeed, $\pi$ is not an allowed pattern of the tent map.
\end{example}
\begin{example}\label{ex:count3}
For $\sigma = --$, the permutation $\pi = 3425617$ satisfies condition (a). This forces $s_1\cdots s_n = 0001101$. However, now matter what you assign $s_{[7,\infty)}$, we have that $s_{[5,\infty)}$ must be larger. If you assign $s_{[7,\infty)} = (10)^\infty$, which is the largest word possible with respect to the ordering $\prec_\sigma$, then the pattern for $s$ is undefined.
\end{example}

Notice that the two examples \ref{ex:cn1} and \ref{ex:cn3} satisfy $\tau^* = \pihat^*$ for the permutations in the counterexamples presented in Examples \ref{ex:count1} and \ref{ex:count3}, respectively. The extra conditions presented in the definition of a $*$-$\sigma$-segmentation take care of Example  \ref{ex:count3}, but not Example  \ref{ex:count1}. There is an additional extra condition required in order for a given permutation to be an allowed pattern of $\Sigma_\sigma$. This is exactly the condition $(\dagger)$ given in the statement of the theorem below, which completely characterizes the permutations of $\Allow_n(\Sigma_\sigma)$.

\begin{theorem}\label{thm:desc allowed}
A permutation $\pi \in \Allow_n(\Sigma_\sigma)$ if and only if $\hat\pi^* \in \C^{\sigma, *}$ and also $\pi$ satisfies the following condition:
\vspace{.3cm}

$(\dagger)$ There exists some $*$-$\sigma$-segmentation $0 = e_0\leq e_1\leq \cdots \leq e_k = n$ of $\hat\pi^*$ so that there is no $b$ satisfying 
	\begin{itemize}
	\item $\st(\pi_{n-2b}\pi_{n-b}\pi_n)= 312$ or $132$, and
	\item $e_t< \pi_{n-b-i}\leq e_{t+1}$ if and only if $e_t<\pi_{n-i}\leq e_{t+1}$ for all $1\leq i \leq b$.
	\end{itemize}
\end{theorem}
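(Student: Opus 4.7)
The plan is to prove both directions separately, passing between an infinite word $s$ and the permutation $\pi = \Pat(s, \Sigma_\sigma, n)$ it realizes.

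For necessity, given $s$, I would set $e_t = |\{1 \leq i \leq n : s_i < t\}|$ and verify that this yields a $*$-$\sigma$-segmentation of $\hat\pi^*$. Condition (a) follows from the structure of $\prec_\sigma$: iterates with first letter $t$ have contiguous ranks, and within each block the ranks are ordered identically to their shifts (if $\sigma_t = +$) or reversely (if $\sigma_t = -$). Conditions (b)--(e) address endpoint pathologies and follow from distinctness of the iterates; for instance, for (b), if $\sigma_0 = +$ and $\pi_{n-1}\pi_n = 21$ while $e_1 \geq 2$, then $s^{(n-1)}$ and $s^{(n)}$ both start with $0$, so the $\sigma_0 = +$ monotonicity rule cascades forward and forces $s_i = 0$ for all $i \geq n-1$, making $s^{(n-1)} = s^{(n)} = 0^\infty$ and contradicting distinctness. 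Cases (c)--(e) are handled by analogous cascades. For $(\dagger)$, the letter-group matching condition translates to $s_{[n-2b, n-b-1]} = s_{[n-b, n-1]} =: q$, so $s^{(n-2b)}, s^{(n-b)}, s^{(n)}$ is an orbit of length $2$ under the map $w \mapsto qw$. Under the order embedding of $(\W_k, \prec_\sigma)$ into $[0,1]$, this map is an affine contraction with orientation determined by the parity of $-$'s among $\sigma_{q_1}, \ldots, \sigma_{q_b}$. Iterating a contraction yields either a monotonically convergent sequence (pattern $123$ or $321$) or an alternating convergent sequence (pattern $213$ or $231$), so patterns $132$ and $312$ are impossible.

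For sufficiency, I would build $s$ by first setting $s_i$ to be the unique $t$ with $\pi_i \in (e_t, e_{t+1}]$, and then extending with a tail $u = s_{n+1}s_{n+2}\ldots$ chosen so that $s^{(n+1)} = u$ lands in the appropriate interval of $(\W_k, \prec_\sigma)$ --- bounded by two specific shifts within the letter group of $s_n$, as dictated by the rank of $\pi_n$ among its group. The main obstacle is that these bounding shifts are themselves iterates whose tails depend on $u$: the shifts $s^{(i+1)}$ and $s^{(j+1)}$ of group-neighbors of $s^{(n)}$ all end with $u$, so finding a suitable $u$ is a fixed-point problem. I expect the crux to be showing that this problem has a solution precisely when $(\dagger)$ and (b)--(e) hold: a failure of $(\dagger)$ produces exactly the contraction structure from the necessity proof, which forces the interval between the neighboring shifts to collapse onto the fixed point $q^\infty$ (as in Example~\ref{ex:count1}), whereas when no such $b$ exists the interval remains nondegenerate and an eventually-periodic tail realizing $\pi$ can be constructed explicitly; conditions (b)--(e) play the analogous boundary role when $s^{(n)}$ is the maximum or minimum of its letter group.
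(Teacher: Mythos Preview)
Your necessity direction is correct and, for $(\dagger)$, arguably cleaner than the paper's. The paper argues (tersely) that a bad $b$ forces $s_{[n-2b,\infty)}=q^\infty$ by an infinite-descent word argument; your contraction reformulation makes the same point more transparently. One small check worth making explicit: under $\phi_k\circ\psi_\sigma$ the prepend map $w\mapsto qw$ really is affine with slope $\pm k^{-b}$, because $\psi_\sigma(qw)_{[b+1,\infty)}$ equals $\psi_\sigma(w)$ or its complement according to the parity of $\nn{q}$.

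Your sufficiency direction, however, stops short of a proof. You correctly isolate the difficulty as a fixed-point problem for the tail $u$, but ``I expect the crux to be showing that this problem has a solution'' is exactly where the work lies, and the abstract framing does not by itself produce the tail. The paper resolves this not by a fixed-point argument but by an explicit construction, split into cases on $\pi_n$. If $\pi_n\in\{1,n\}$ one simply takes $s=s_1\cdots s_{n-1}w_\sigma$ (resp.\ $W_\sigma$); conditions (b)--(e) are precisely what guarantee the pattern is defined. If $1<\pi_n<n$, the key idea you are missing is to look at the \emph{global} neighbour $\pi_x=\pi_n\pm 1$ (not the letter-group neighbours), set $p=s_x\cdots s_{n-1}$, and take the tail to be $p^{n-1}$ followed by $w_\sigma$ or $W_\sigma$ depending on the parities of $n$ and $\nn{p}$. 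This makes $s_{[n,\infty)}$ and $s_{[x,\infty)}$ adjacent in the $\prec_\sigma$ order by construction. Verifying that the resulting pattern is defined and equals $\pi$ still requires several nontrivial lemmas: one must show $p$ is primitive (or $p=q^2$ with $\nn{q}$ odd), that at least one of the two candidate words has a defined pattern, and that no $s_{[c,\infty)}$ can slip between $s_{[n,\infty)}$ and $s_{[x,\infty)}$ --- and it is in this last step that $(\dagger)$ is invoked. Your fixed-point heuristic points in the right direction, but the concrete construction and its verification are the substance of the argument.
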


\begin{example}
Let us see why $(\dagger)$ takes care of the counterexample presented in Example \ref{ex:count1}. As before, $\sigma = +-$ and $\pi = 591482637$. The permutation $\pihat^* = 467893{*}21$ has two $*$-$\sigma$-segmentations: $0\leq 4 \leq 9$ or $0\leq 5\leq 9$. Consider $b = 2$. We claim that this choice of $b$ will violate $(\dagger)$ for either $*$-$\sigma$-segmentation. Indeed, if $b = 2$, then $\st(\pi_{n-2b}\pi_{n-b}\pi_n) = \st(\pi_5\pi_7\pi_9) = \st(867)= 312$ and also $4,5< \pi_5, \pi_7 = 8,6 \leq 9$ and $0<\pi_6, \pi_8 = 2,3 \leq 4,5$. 
Therefore, since $\pi$ violates $(\dagger)$, $\pi \not\in \Allow_n(\Sigma_\sigma)$. 
\end{example}

To prove this theorem, we require several lemmas. For a word segment $q = q_1\cdots q_m$ we define $\nn{q} = |\{ i : \sigma_{q_i} = - \}|.$

The first two lemmas will take care of the forward direction, that if $\pi \in \PP_n(\Sigma_\sigma)$, then $\hat\pi^* \in \C^{\sigma, *}$ and $(\dagger)$ holds.  
\begin{lemma}\label{lem:pihat in C}
If $\pi \in \Allow_n(\Sigma_\sigma)$, then $\pihat^*\in \C^{\sigma,*}$.
\end{lemma}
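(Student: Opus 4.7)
The plan is to construct a $*$-$\sigma$-segmentation of $\hat\pi^*$ directly from a witness infinite word for $\pi$, and then verify the five defining properties (a)--(e) in turn. Since $\pi\in\Allow_n(\Sigma_\sigma)$, fix $s\in\W_k$ with $\pi=\Pat(s,\Sigma_\sigma,n)$, set $x_i=\Sigma_\sigma^{i-1}(s)$ for $1\le i\le n$, and let $y_j=x_{\pi^{-1}(j)}$, so that $y_1\prec_\sigma y_2\prec_\sigma\cdots\prec_\sigma y_n$ is the sorted orbit. Define the natural segmentation $e_t=|\{j:(y_j)_1<t\}|$ for $0\le t\le k$, so that block $t$ (positions $e_t+1,\ldots,e_{t+1}$ of $\hat\pi^*$) consists of exactly those entries whose corresponding $y_j$ begins with letter $t$. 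I will argue that this tuple $(e_0,\ldots,e_k)$ is a $*$-$\sigma$-segmentation.

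For condition (a): whenever $\pi^{-1}(j)<n$ one has $\hat\pi_j=\pi_{\pi^{-1}(j)+1}$, which is precisely the rank of $\Sigma_\sigma(y_j)$ in the orbit; the single exceptional position $j=\pi_n$ is precisely where $\hat\pi^*$ carries the $*$. If $y_j,y_{j'}$ both lie in block $t$ they share the first letter $t$, so by the recursive clause in the definition of $\prec_\sigma$, the inequality $y_j\prec_\sigma y_{j'}$ is equivalent to $\Sigma_\sigma(y_j)\prec_\sigma\Sigma_\sigma(y_{j'})$ when $\sigma_t=+$ and to its reverse when $\sigma_t=-$. Since ranks respect $\prec_\sigma$, the non-$*$ entries within block $t$ are monotone in exactly the direction prescribed by $\sigma_t$, giving (a).

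For conditions (b)--(e) I argue each case by contradiction. The hypothesis on the extreme entries of $\hat\pi^*$ pins down $y_1,y_2$ (in (b),(e)) or $y_{n-1},y_n$ (in (c),(d)) as specific members of $\{x_{n-2},x_{n-1},x_n\}$, and supposing the conclusion fails forces additional first-letter constraints among $s_{n-2},s_{n-1},s_n$. Applying the $\prec_\sigma$-rule to the pair whose ranks are prescribed then produces a strict inequality between $x_{n+1}$ and $y_1$ (respectively $y_n$), placing $x_{n+1}$ strictly outside the range of the orbit. This forces the first letter $s_{n+1}$ to be $0$ or $k-1$, and iterating the same comparison step by step determines the tail $s_{n+1}s_{n+2}\cdots$ uniquely: it is $0^\infty$ in case (b), $(k-1)^\infty$ in case (c), and $(0(k-1))^\infty$ in cases (d) and (e). In the first two cases this forces $x_n=x_{n+1}$, and in the latter two $x_{n-1}=x_{n+1}$; either way this coincidence contradicts the strict inequality produced at the initial step.

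The main obstacle is the bookkeeping in (d) and (e), where the signs $\sigma_0=\sigma_{k-1}=-$ alternate the direction of comparison from one letter to the next, so one must check carefully that the forced tail is exactly the period-$2$ word $(0(k-1))^\infty$ and that it indeed collapses $x_{n+1}$ onto $x_{n-1}$ rather than some other orbit element, so that the resulting coincidence contradicts precisely the strict inequality derived from the first comparison. Cases (b) and (c) are cleaner in that the forced tail is constant and the collision $x_n=x_{n+1}$ is immediate.
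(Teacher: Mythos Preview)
Your proposal is correct and follows essentially the same approach as the paper: you take the natural segmentation $e_t=d_t(s_1\cdots s_n)$ from a witness word, verify (a) via the recursive clause of $\prec_\sigma$, and handle (b)--(e) by forcing the tail of $s$ and deriving a collision. The only difference is cosmetic---the paper phrases the contradiction for (b)--(e) by locating the first position where the forced tail pattern breaks, whereas you show the tail is fully determined and then collapse two orbit points; these are contrapositives of the same observation. (One tiny slip: in case~(e) the forced tail $s_{n+1}s_{n+2}\cdots$ is $((k-1)0)^\infty$ rather than $(0(k-1))^\infty$, but your conclusion $x_{n-1}=x_{n+1}$ remains correct.)
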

\begin{proof}
Suppose $\pi = \Pat(s, \Sigma_\sigma, n)$ where $s = s_1s_2\cdots$ and suppose $d_t = d_t(s_1\cdots s_n)$ for all $0\leq t \leq k$. Then we claim that $e_t = d_t$ for all $0\leq t \leq k$ is a $*$-$\sigma$-segmentation of $\pihat^*$. 

Observe that if $e_t < \pi_i<\pi_j \leq e_{t+1}$, then $s_{[i,\infty)} \prec_\sigma s_{[j,\infty)}$ where $s_i = s_j = t$. If $\sigma = +$, then $s_{[i,\infty)} \prec_\sigma s_{[j,\infty)}$ implies that $s_{[i+1,\infty)} \prec_\sigma s_{[j+1,\infty)}$ so if $i,j<n$, this implies that $\pi_{i+1}<\pi_{j+1}$, which in turn implies that $\pihat^*_i <\pihat^*_j$. Therefore the sequence $\pihat^*_{e_t+1} \cdots \pihat^*_{e_{t+1}}$ is increasing when $\sigma = +$. Similarly if $\sigma = -$, then $s_{[i,\infty)} \prec_\sigma s_{[j,\infty)}$ implies that $s_{[i+1,\infty)} \succ_\sigma s_{[j+1,\infty)}$ so if $i,j<n$, this implies that $\pi_{i+1}>\pi_{j+1}$, which in turn implies that $\pihat^*_i >\pihat^*_j$. Therefore the sequence $\pihat^*_{e_t+1} \cdots \pihat^*_{e_{t+1}}$ is decreasing when $\sigma = -$. Therefore, $\hp$ satisfies condition (a). 

If $\sigma_0 = +$ and $\pihat^*_1\pihat^*_2 = *1$, this must mean that $\pi_{n-1}\pi_n = 21$. If $e_1>1$, then we would have $s_{n-1} = s_n = 0$ and that $s_{[n,\infty)} \prec_\sigma s_{[n-1,\infty)}$. It is clear that this is impossible. Since $\Pat(s, \Sigma_\sigma, n)$ is defined, there is some $m>n$ so that $s_m >0$ and $s_j = 0$ for all $n-1\leq j <m$, but then since $\sigma_0 = +$, this would imply that $s_{[m,\infty)} \prec_\sigma s_{[m-1,\infty)}$, which is a contradiction since $s_m >0$ and $s_{m-1} = 0$. 
Similarly, for when if $\sigma_{k-1} = +$ and $\pihat^*_{n-1}\pihat^*_n = n*$. Therefore, $\hp$ satisfies conditions (b) and (c). 
%, this must mean that $\pi_{n-1}\pi_n = (n-1)n$. If $e_{n-1}<n-1$, then we would have $s_{n-1} = s_n = k-1$ and that $s_{[n-1,\infty)} \prec_\sigma s_{[n,\infty)}$. It is clear that this is also impossible. Since $\Pat(s, \Sigma_\sigma, n)$ is defined, there is some $m>n$ so that $s_m<k-1$ and $s_j = k-1$ for all $n-1\leq j <m$, but then since $\sigma_{k-1} = +$, this would imply that $s_{[m-1,\infty)} \prec_\sigma s_{[m,\infty)}$, which is a contradiction since $s_m <k-1$ and $s_{m-1} = k-1$. 

If $\sigma_0 = \sigma_{k-1} = -$ and both $\pihat^*_1 = n$ and $\pihat^*_{n-1} \pihat^*_{n} = 1*$, then $\pi_{n-2}\pi_{n-1}\pi_n  = (n-1)1n$. If both $e_1>0$ and $e_{k-1}<n-1$, then this implies that $s_{n-2}s_{n-1}s_n = (k-1)0(k-1)$ and $s_{[n-2,\infty)}\prec_\sigma s_{[n,\infty)}$. Let $m>n$ denote the first place for $i>0$ where either $s_{n+2i}<k-1$ or $s_{n+2i -1}>0$. Notice that since $\sigma_0 = \sigma_{k-1} = -$, $s_{[n-2,\infty)}\prec_\sigma s_{[n,\infty)}$ implies that $s_{[n-1,\infty)}\succ_\sigma s_{[n+1,\infty)}$ and the signs continue to alternate until we reach $s_m$. If $m = n+2i$ and $s_{m}<k-1$, then since $s_m$ is an even number of steps from $s_n$, we alternate signs an even number of times to get that $s_{[m-2,\infty)}\prec_\sigma s_{[m,\infty)}$ which is a contradiction since $s_{m-2} = k-1$ and $s_{m}< k-1$. For similar reason, if $m = n+2i-1$ and $s_m>0$, we arrive at a contradiction. Therefore, we must have that either $e_1 = 0$ or $e_{k-1}\geq n-1$. 
The case is similar when $\sigma_0 = \sigma_{k-1} = -$ and both $\pihat^*_n = 1$ and $\pihat^*_{1} \pihat^*_{2} = *n$. Therefore, $\hp$ satisfies conditions (d) and (e) and the lemma is proven.
%, this implies that $\pi_{n-2}\pi_{n-1}\pi_n  = 2n1$. If both $e_{k-1}<n$ and $e_1>1$, then this implies that $s_{n-2}s_{n-1}s_n = 0(k-1)0$ and $s_{[n,\infty)}\prec_\sigma s_{[n-2,\infty)}$. Let $m>n$ denote the first place for $i>0$ where either $s_{n+2i}>0$ or $s_{n+2i -1}<k-1$. Notice that, as in the previous paragraph, since $\sigma_0 = \sigma_{k-1} = -$, $s_{[n-2,\infty)}\prec_\sigma s_{[n,\infty)}$ implies that $s_{[n-1,\infty)}\succ_\sigma s_{[n+1,\infty)}$ and the signs continue to alternate until we reach $s_m$. If $m = n+2i$ and $s_{m}>0$, then since $s_m$ is an even number of steps from $s_n$, we alternate signs an even number of times to get that $s_{[m,\infty)}\prec_\sigma s_{[m-2,\infty)}$ which is a contradiction since $s_{m-2} = 0$ and $s_{m}>0$. For similar reason, if $m = n+2i-1$ and $s_m<k-1$, we arrive at a contradiction. Therefore, we must have that either $e_{k-1} = n$ or $e_{1}\leq 1$. 
\end{proof}

\begin{lemma}\label{lem:pi dagger}
If $\pi \in \Allow_n(\Sigma_\sigma)$, then $\pi$ satisfies $(\dagger)$.
\end{lemma}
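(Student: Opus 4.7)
The plan is to assume $\pi \in \Allow_n(\Sigma_\sigma)$ is realized by some infinite word $s$ and to use the canonical segmentation $e_t = d_t(s_1\cdots s_n)$ supplied by the proof of Lemma~\ref{lem:pihat in C}. Supposing for contradiction that some $b$ violates $(\dagger)$ with respect to this segmentation, I would derive a contradiction directly from the structure of $s$.

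The first step is to translate the second bullet of $(\dagger)$ into a concrete condition on $s$. Under the canonical segmentation, $e_t < \pi_j \leq e_{t+1}$ is equivalent to $s_j = t$, so the second bullet reduces to $s_{n-b-i} = s_{n-i}$ for $1 \leq i \leq b$. Setting $q := s_{n-2b}\cdots s_{n-b-1}$, this identity gives $q = s_{n-b}\cdots s_{n-1}$ as well. Letting $w := s_{[n,\infty)}$, we then have
\[
u := s_{[n-2b,\infty)} = q^2 w, \qquad v := s_{[n-b,\infty)} = qw.
\]
Moreover $w \neq q^\infty$, for otherwise $v = q\cdot q^\infty = q^\infty = w$, which would force $s_{[n-b,\infty)} = s_{[n,\infty)}$ and make $\Pat(s,\Sigma_\sigma,n)$ undefined.

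The heart of the argument---which I expect to be the principal obstacle---is the following observation: each of $qw$, $q^2 w$, and $q^\infty$ lies on the same side of $w$ with respect to $\prec_\sigma$. To prove this, let $k$ be the largest integer such that $w$ and $q^\infty$ agree on their first $k$ letters, which is finite since $w \neq q^\infty$. By the $b$-periodicity of $q^\infty$, each of $qw$, $q^2 w$, and $q^\infty$ agrees with $q^\infty$ on at least its first $k+1$ letters; so each agrees with $w$ on positions $1$ through $k$ and then has $q^\infty_{k+1}$ at position $k+1$, whereas $w$ has $w_{k+1} \neq q^\infty_{k+1}$ there. Since the reversal count $c = |\{1 \leq i \leq k : q^\infty_i \in T^-_\sigma\}|$ at position $k+1$ is common to all three comparisons, the defining rule for $\prec_\sigma$ assigns the same order relation in each.

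To finish, the first bullet of $(\dagger)$ forces $\st(\pi_{n-2b}\pi_{n-b}\pi_n) \in \{312, 132\}$, which in our notation says either $v \prec_\sigma w \prec_\sigma u$ or $u \prec_\sigma w \prec_\sigma v$. Either case requires $v = qw$ and $u = q^2 w$ to lie strictly on opposite sides of $w$, contradicting the observation above. Hence no such $b$ exists, so $\pi$ satisfies $(\dagger)$.
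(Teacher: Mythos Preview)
Your proof is correct and shares the paper's setup: take $s$ with $\Pat(s,\Sigma_\sigma,n)=\pi$, use the canonical segmentation $e_t=d_t(s_1\cdots s_n)$ from Lemma~\ref{lem:pihat in C}, and translate the second bullet into $s_{n-2b}\cdots s_{n-1}=q^2$ so that $s_{[n-2b,\infty)}=q^2w$ and $s_{[n-b,\infty)}=qw$ with $w=s_{[n,\infty)}$. Where you diverge is in extracting the contradiction. The paper argues iteratively: since $w$ lies strictly between $qw$ and $q^2w$, it must begin with $q$; repeating this forces $w=q^\infty$, so the pattern is undefined. You instead give a one-shot argument: by locating the first position where $w$ and $q^\infty$ disagree, you show directly that $qw$ and $q^2w$ (and indeed $q^\infty$) must all lie on the \emph{same} side of $w$ under $\prec_\sigma$, immediately ruling out the $312/132$ relative order. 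Your route is cleaner and makes fully explicit a step the paper compresses into ``which forces $s_n\cdots s_{n+b}=q$, and so on''; the paper's version, on the other hand, makes more visible that the underlying obstruction is a forced periodicity of the tail.
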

\begin{proof}
Suppose again that $\pi = \Pat(s, \Sigma_\sigma, n)$ where $s = s_1s_2\cdots$.
If there were no such $\sigma$-segmentation satisfying $(\dagger)$, then no matter which word $s$ we choose, we have $s_{n-2b}\cdots s_{n-1} = q^2$ for some $b$. Finding $s_{n+1}s_{n+2}\cdots$ becomes impossible since $s_ns_{n+1}s_{n+2}\cdots$ lies between $s_{n-2b}\cdots  = qqs_n\cdots $ and $s_{n-b}\cdots = qs_n\cdots $ which forces $s_n\cdots s_{n+b} = q$, and so on. We end up with $s_{n-2b}\cdots  = q^\infty$ and so $\Pat(s,\Sigma_\sigma, n)$ is undefined.
\end{proof}

The next lemmas will allow us to prove the reverse direction of Theorem \ref{thm:desc allowed}.
The idea behind the proofs of these lemmas will be that for some $\pi$ satisfying the conditions of Theorem \ref{thm:desc allowed}, we construct a word $s$ so that $\pi$ is the pattern of $s$ with respect to $\Sigma_\sigma$, proving that $\pi$ is an allowed pattern of $\Sigma_\sigma$.
In these lemmas, we consider the cases when $\pi_n =1$, $\pi_n = n$, and $1<\pi_n<n$. 

Let $W_\sigma$ and $w_\sigma$ be the greatest and least words, respectively, associated to ordering $\prec_\sigma$. That is, we define $W_\sigma$ and $w_\sigma$ by
$$W_\sigma = \begin{cases} (k-1)^\infty & \sigma_{k-1} = + \\ (k-1)0^\infty & \sigma_{k-1} = - \, \mbox{ and } \, \sigma_0 = + \\ ((k-1)0)^\infty & \sigma_{k-1} = - \, \mbox{ and } \, \sigma_0 = - \end{cases} $$
and 
$$w_\sigma = \begin{cases} 0^\infty & \sigma_{0} = + \\ 0(k-1)^\infty & \sigma_{0} = - \, \mbox{ and } \, \sigma_{k-1} = + \\ (0(k-1))^\infty & \sigma_{0} = - \, \mbox{ and } \, \sigma_{k-1} = -. \end{cases} $$

\begin{lemma}\label{lem:pin=1}
If $\pi_n = 1$, $\pihat^* \in \C^{\sigma,*}$, and $\pi$ satisfies $(\dagger)$, then $\pi \in \Allow_n(\Sigma_\sigma)$. 
\end{lemma}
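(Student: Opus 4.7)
The approach is constructive: from $\pi$ satisfying the hypotheses I will build an infinite word $s\in\W_k$ with $\Pat(s,\Sigma_\sigma,n)=\pi$. The guiding idea is that $\pi_n=1$ being the smallest entry forces $s_{[n,\infty)}$ to be the $\prec_\sigma$-minimum infinite word, namely $w_\sigma$. Before constructing $s$, I would note that $(\dagger)$ is automatic when $\pi_n=1$: any triple $\pi_{n-2b}\pi_{n-b}\pi_n$ ends in the smallest value, so its reduction is $231$ or $321$, never $132$ or $312$.

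Fix a $*$-$\sigma$-segmentation $0=e_0\le\cdots\le e_k=n$ of $\hat\pi^*$, and for $1\le i\le n$ set $s_i=t$ where $t$ is the unique index with $e_t<\pi_i\le e_{t+1}$; extend by $s_{[n,\infty)}=w_\sigma$. In all three defining cases $w_\sigma$ begins with $0$, which is consistent with $s_n=0$ forced by $\pi_n=1$.

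To verify $\Pat(s,\Sigma_\sigma,n)=\pi$, I would prove $\pi_i<\pi_j\iff s_{[i,\infty)}\prec_\sigma s_{[j,\infty)}$ for every $1\le i<j\le n$. When $s_i\ne s_j$, the segment ranks match the order of $\pi_i$ and $\pi_j$ and the first-letter clause of $\prec_\sigma$ closes the case. When $s_i=s_j=t$ with $i,j<n$, condition (a) on the monotonicity of segment $t$, together with $\hat\pi_{\pi_i}=\pi_{i+1}$ and $\hat\pi_{\pi_j}=\pi_{j+1}$ (neither is $*$ since $\pi_i,\pi_j\ne\pi_n=1$), transfers the comparison of $\pi_i,\pi_j$ with the appropriate sign to that of $\pi_{i+1},\pi_{j+1}$; clauses~(2)/(3) of $\prec_\sigma$ mirror this on the word side. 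Iterate until the two suffixes disagree in some first letter, or until the base case $j=n$ is reached.

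The base case $j=n$ is the main obstacle: it requires $s_{[i,\infty)}\ne w_\sigma$ for all $i<n$, so that the strict inequality $w_\sigma\prec_\sigma s_{[i,\infty)}$ matches $\pi_n=1<\pi_i$. This is exactly where conditions (b)--(e) of the $*$-$\sigma$-segmentation are invoked. If $\sigma_0=+$, then $s_{[i,\infty)}=0^\infty$ forces $s_{n-1}=0$ and $\pi_{n-1}\in[2,e_1]$; since $\hat\pi_{\pi_{n-1}}=1$ is the smallest value and the increasing segment $[1,e_1]$ of $\hat\pi^*$ has position~$1$ occupied by $*$, the value $1$ must sit at position~$2$, producing $\hat\pi^*_1\hat\pi^*_2=*1$, and condition~(b) then forces $e_1\le 1$, contradicting $\pi_{n-1}\ge 2$. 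If $\sigma_0=-$ and $\sigma_{k-1}=+$, then $w_\sigma=0(k-1)^\infty$ and the equality $s_{[i,\infty)}=w_\sigma$ with $i<n$ would require $s_n=k-1$, contradicting $s_n=0$. If $\sigma_0=\sigma_{k-1}=-$, then $w_\sigma=(0(k-1))^\infty$ has period~$2$ and, by shifting, a hypothetical violation reduces to $i=n-2$; this configuration forces $\pi_{n-1}=n$ and $\pi_{n-2}=2$, giving $\hat\pi^*_n=1$ and $\hat\pi^*_1\hat\pi^*_2=*n$, so condition~(e) applies and yields $e_{k-1}=n$ or $e_1\le 1$, both immediately contradictory. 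Once these cases are excluded, the induction closes, $\Pat(s,\Sigma_\sigma,n)=\pi$, and hence $\pi\in\Allow_n(\Sigma_\sigma)$.
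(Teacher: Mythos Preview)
Your proof is correct and follows essentially the same route as the paper: build $s=s_1\cdots s_{n-1}w_\sigma$ from the $\pi$-monotone word attached to a $*$-$\sigma$-segmentation, then use conditions (b)--(e) to rule out any collision $s_{[i,\infty)}=w_\sigma$ with $i<n$ and finish with the usual sign-tracking induction. Your observation that $(\dagger)$ is vacuous when $\pi_n=1$ is a nice extra, and in the case $\sigma_0=\sigma_{k-1}=-$ you correctly invoke condition~(e) (the paper's text cites~(d) here, which is a slip---with $\pi_n=1$ the $*$ sits in position~$1$ of $\hat\pi^*$, giving $\hat\pi^*_1\hat\pi^*_2=*n$ and $\hat\pi^*_n=1$, exactly the hypotheses of~(e)).
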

\begin{proof}
First take a $*$-$\sigma$-segmentation of $\pi$ specified by ($\dagger$). Let $s_1\cdots s_n$ be the $\pi$-monotone word associated to this $*$-$\sigma$-segmentation. %, so that there exists no $b$ such that  $\pi_{n-2b}\pi_{n-b}\pi_n = 312$ or $132$, and $e_t< \pi_{n-2i}\leq e_{t+1}$ if and only if $e_t<\pi_{n-i}\leq e_{t+1}$ for all $1\leq i \leq b$
Let $s := s_1\cdots s_{n-1}w_\sigma$. 

First, we show that $\Pat(s, \Sigma_\sigma, n)$ exists. When $\sigma_0 = +$, we have that $w_\sigma = 0^\infty$. Therefore, the pattern of $s$ will not be defined only if $s_{n-1} = 0$. If this is the case, then we must have that $\pi_{n-1} = 2$.  If it were not, then there would be some $1\leq j <n-1$ so that $\pi_j = 2$ and $\pi_{n-1}>2$. But then since $e_0<\pi_j<\pi_{n-1}\leq e_1$, we must have that $\pi_{j+1}<\pi_n = 1$, which is impossible. Therefore, $\pi_{n-1}\pi_n = 21$, which implies that $\pihat^*_1 \pihat^*_2 = *1$. However, since $0=e_0\leq e_1\leq \cdots \leq e_k = n$ is a $*$-$\sigma$-segmentation, we must have $e_1 \leq 1$. Since $s_1\cdots s_n$ is a $\pi$-monotone word, this implies that $s_n =0$ and $s_{n-1} \neq 0$, and we get a contradiction. Thus, the pattern is defined in this case.

When $\sigma_0 = -$ and $\sigma_{k-1} = +$, we have $w_\sigma = 0(k-1)^\infty$, and so the pattern must be defined in the case.

In the case that $\sigma_0 =\sigma_{k-1} = -$, we have $w_\sigma = (0(k-1))^\infty$, and so the pattern of $s$ will not be defined only if $s_{n-2}s_{n-1} = 0(k-1)$. Suppose this is the case. Then similarly to above, we can show that $\pi_{n-2}\pi_{n-1}\pi_n = 2n1$. This would mean that $\pihat^*_1 = n$ and $\pihat^*_{n-1} \pihat^*_{n} = 1*$, and so either $e_1=0$ or $e_{k-1} \geq n-1$. Therefore either $\pi_1 \neq 0$ or $\pi_{n-1} \neq k-1$, which gives us a contradiction. Therefore, the pattern is defined. 

It remains to show that $\pi_i<\pi_j$ implies $s_{[i,\infty)}\prec_\sigma s_{[j,\infty)}$. 
If there is some $t$ so that $\pi_i\leq e_t<\pi_j$, then we are done since this would imply $s_i<s_j$. 
Otherwise, there is some $t$ so that $e_t \leq \pi_i<\pi_j<e_{t+1}$ and thus $s_i = s_j = t$. 
If $\pi_i<\pi_j$, let $m$ be so that $s_i\cdots s_{i+m-1} = s_j\cdots s_{j+m-1}$ and $s_{i+m} \neq s_{j+m}$. First, assume that $i+m, j+m \leq n$. Since for all $1\leq \ell <m$, there is always some $t$ so that $e_t \leq \pi_{i+\ell}, \pi_{j+\ell}<e_{t+1}$, by definition of $\S^\sigma$, when $t \in T^+$, we have $\pi_{i+\ell} <\pi_{j+\ell}$ if and only if $\pi_{i+\ell+1} <\pi_{j+\ell+1}$ and when $t \in T^-$, we have $\pi_{i+\ell} <\pi_{j+\ell}$ if and only if $\pi_{i+\ell+1} >\pi_{j+\ell+1}$.

Suppose $\nn{s_i\cdots s_{i+m-1}}$ is even. 
Then we have that $\pi_{i}<\pi_{j}$ if and only if $\pi_{i+m}<\pi_{j+m}$ since the inequality changes and even number of times. Since $s_{i+m} \neq s_{j+m}$, this implies that $s_{i+m} <s_{j+m}$, and thus $s_{[i+m,\infty)}\prec_\sigma s_{[j+m,\infty)}$. However, we have $s_{[i,\infty)}\prec_\sigma s_{[j,\infty)}$ if and only if $s_{[i+m,\infty)}\prec_\sigma s_{[j+m,\infty)}$ since we have the inequality $\prec_\sigma$ switch an even number of times.
 Similarly, if $\nn{s_i\cdots s_{i+m-1}}$ is odd, $\pi_i<\pi_j$ if and only if $\pi_{i+m}>\pi_{j+m}$, which implies that $s_{i+m}>s_{j+m}$. This in turn implies that $s_{[i+m,\infty)}\succ_\sigma s_{[j+m,\infty)}$, which happens if and only if $s_{[i,\infty)}\prec_\sigma s_{[j,\infty)}$.

If one of $i+m$ or $j+m$ is greater than $n$, then take $m'$ to be such that either $i+m'$ or $j+m'$ is $n$ and the other is less than $n$. Notice that $\pi_n = 1< \pi_\ell$ for all $\ell<n$ and $s_{[n,\infty)} = w_\sigma \prec_\sigma s_{[\ell,\infty)}$ for all $\ell<n$. 
The same argument as above applies in this case where we now check whether $\nn{s_{i}\cdots s_{i+m'}}$ is even or odd.
It follows that $\pi_i<\pi_j$ implies $s_{[i,\infty)}\prec_\sigma s_{[j,\infty)}$. 
\end{proof}
\begin{lemma}\label{lem:pin=n}
If $\pi_n = n$, $\pihat^* \in \C^{\sigma,*}$, and $\pi$ satisfies $(\dagger)$, then $\pi \in \Allow_n(\Sigma_\sigma)$. 
\end{lemma}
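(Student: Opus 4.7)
The plan is to mirror the proof of Lemma \ref{lem:pin=1} almost verbatim, but using the largest word $W_\sigma$ in place of the smallest word $w_\sigma$. Starting from a $*$-$\sigma$-segmentation $0 = e_0 \leq e_1 \leq \cdots \leq e_k = n$ of $\pihat^*$ witnessing $(\dagger)$, I form the associated $\pi$-monotone word $s_1 \cdots s_n$ and then define $s := s_1 \cdots s_{n-1} W_\sigma$. The goal is to show $\Pat(s,\Sigma_\sigma,n)$ is defined and equals $\pi$.

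For the first step (that the pattern is defined), I split into the three cases in the definition of $W_\sigma$. When $\sigma_{k-1}=+$, the only obstruction is $s_{n-1}=k-1$, and an argument analogous to the one in Lemma \ref{lem:pin=1} shows this would force $\pi_{n-1}\pi_n=(n-1)n$ and hence $\pihat^*_{n-1}\pihat^*_n=n*$; condition (c) of the $*$-$\sigma$-segmentation then yields $e_{k-1}\geq n-1$, which contradicts $s_{n-1}=k-1$. When $\sigma_{k-1}=-$ and $\sigma_0=+$, we have $W_\sigma = (k-1)0^\infty$, so no obstruction can occur. When $\sigma_{k-1}=\sigma_0=-$, the problematic case is $s_{n-2}s_{n-1}=(k-1)0$, which forces $\pi_{n-2}\pi_{n-1}\pi_n = (n-1)1n$, so $\pihat^*_n = 1$ and $\pihat^*_1\pihat^*_2 = *n$; condition (e) then gives $e_{k-1}=n$ or $e_1\leq 1$, either of which contradicts the assumed values of $s_{n-2}s_{n-1}$.

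For the second step (that $\pi_i<\pi_j$ implies $s_{[i,\infty)}\prec_\sigma s_{[j,\infty)}$), the argument is essentially identical to Lemma \ref{lem:pin=1}. If $\pi_i$ and $\pi_j$ lie in different blocks $(e_t,e_{t+1}]$ then $s_i<s_j$ and we are done. Otherwise $s_i=s_j=t$, and letting $m$ be the first index where the suffixes differ, I run the same parity argument on $\nn{s_i\cdots s_{i+m-1}}$: each intermediate position $i+\ell, j+\ell$ (with $\ell<m$) still lies inside a common monotone block, so the parity of descents determined by $\sigma$ precisely controls whether the inequality between $\pi$-values preserves or reverses, and likewise for the $\prec_\sigma$-comparison between the corresponding suffixes. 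The only new wrinkle is when one of $i+m, j+m$ exceeds $n$; here I replace $m$ by $m'$ so that one of $i+m',j+m'$ equals $n$, and use the fact that $\pi_n = n > \pi_\ell$ for all $\ell<n$ together with the maximality property $s_{[n,\infty)} = W_\sigma \succ_\sigma s_{[\ell,\infty)}$ for all $\ell<n$, to conclude.

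The main obstacle is the first step: verifying that $\Pat(s,\Sigma_\sigma,n)$ is defined requires carefully converting the combinatorial data ($\pi_{n-1}=n-1$, $\pi_{n-2}\pi_{n-1}\pi_n=(n-1)1n$) back to statements about $\pihat^*$ and invoking precisely the right clause among (c) and (e); condition $(\dagger)$ itself plays no role here but is needed only to guarantee a legitimate $*$-$\sigma$-segmentation exists to begin with, and to ensure the monotone word construction does not force a periodic collision in the finite prefix, which is automatic since $\pi_n=n$ means the final letter sits strictly above all others and no $q^2$-repetition of the form forbidden by $(\dagger)$ can arise to prevent extending past position $n$.
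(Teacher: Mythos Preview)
Your proposal is correct and is precisely the argument the paper intends: the paper's own proof of Lemma~\ref{lem:pin=n} simply sets $s = s_1\cdots s_{n-1}W_\sigma$ and states that ``the argument that the pattern for this word exists and is $\pi$ is parallel to the one above,'' i.e.\ to Lemma~\ref{lem:pin=1}. You have carried out exactly that parallel, invoking conditions (c) and (e) in place of (b) and (d), and your closing observation that $(\dagger)$ is vacuous when $\pi_n = n$ (since $\st(\pi_{n-2b}\pi_{n-b}\pi_n)$ then necessarily ends in $3$) is a correct and welcome clarification.
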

\begin{proof}
Let $s = s_1\cdots s_{n-1}W_\sigma$. The argument that the pattern for this word exists and is $\pi$ is parallel to the one above.
\end{proof}

The case when $1\leq \pi_n\leq n$ is a little more complicated.  
Again, take a $*$-$\sigma$-segmentation of $\pi$ specified by ($\dagger$). Let $s_1\cdots s_n$ be the $\pi$-monotone word associated to this $*$-$\sigma$-segmentation.
Define two words $s^{(1)}$ and $s^{(2)}$ as follows.
$$ s^{(1)} = \begin{cases}up^{n-1}w_\sigma & \mbox{ $n$ is even or $\nn{p}$ even} \\ up^{n-1}W_\sigma & \mbox{ $n$ is odd and $\nn{p}$ odd}  \end{cases}$$
where $\pi_x = \pi_n +1$, $u = s_1\cdots s_{x-1}$ and $p = p^{(1)} = s_x\cdots s_{n-1}$
and 
$$ s^{(2)} = \begin{cases} up^{n-1}W_\sigma & \mbox{ $n$ is even or $\nn{p}$ even} \\ up^{n-1}w_\sigma & \mbox{ $n$ is odd and $\nn{p}$ odd}\end{cases} $$
where $\pi_y = \pi_n -1$, $u = s_1\cdots s_{y-1}$ and $p = p^{(2)}= s_y\cdots s_{n-1}$. We will show that for at least one of these words, the pattern of the word is $\pi$.

\begin{lemma}\label{lem:p primitive}
As defined above, $p$ must be either primitive or $p = q^2$ where $q$ is primitive and $\nn{q}$ is odd.
\end{lemma}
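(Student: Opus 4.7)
The plan is to argue by contradiction: suppose $p = q^m$ with $q$ primitive, $m \geq 2$, and either $m \geq 3$ or $m = 2$ with $\nn{q}$ even. We derive a contradiction, focusing on $p = p^{(1)}$; the case $p = p^{(2)}$ is symmetric (with the relevant block decreasing and the diverging letters swapped).

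First I would pin down the block of $\pi_n$ relative to that of $\pi_x$. A short case analysis rules out their lying in a common block: if they did, then since $x < n$ yet $\pi_x = \pi_n + 1 > \pi_n$, that block would be decreasing; and since $x + b$ also sits in the block at a position between $x$ and $n$, $\pi_{x+b}$ would need to satisfy $\pi_n < \pi_{x+b} < \pi_x = \pi_n + 1$, an impossibility for integers. So $\pi_n = e_t$ and $\pi_x = e_t + 1$ for some $t$. In particular $q_1 = s_x = t$ and $s_n = t - 1$, and $\pi_x$ is the minimum of block $t$.

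Next I exploit the periodicity of $p$. The tails $s_{[x,\infty)}$ and $s_{[x+b,\infty)}$ share the common prefix $q^{m-1}$ of length $(m-1)b$ and first disagree at the $((m-1)b + 1)$-st letter, where the first tail reads $q_1 = t$ while the second reads $s_n = t - 1 < t$. Setting $c := (m-1)\nn{q}$ (the number of $-$ letters in the common prefix), the recursive definition of $\prec_\sigma$ combined with $t > t - 1$ gives $s_{[x,\infty)} \prec_\sigma s_{[x+b,\infty)}$ if and only if $c$ is odd. Since $\pi_{x+b}$ lies in block $t$ and is distinct from the block-minimum $\pi_x = e_t + 1$, we have $\pi_{x+b} \geq e_t + 2 > \pi_x$; so in any realization of $\pi$, the tail ordering must give $s_{[x,\infty)} \prec_\sigma s_{[x+b,\infty)}$, forcing $(m-1)\nn{q}$ to be odd.

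For $m = 2$ this immediately yields $\nn{q}$ odd, contradicting the assumption. For $m \geq 3$, repeating the comparison with $s_{[x+2b,\infty)}$ in place of $s_{[x+b,\infty)}$ (using that $\pi_{x+2b} > \pi_x$ by the same block-minimum argument, since $\pi_{x+2b}$ also lies in block $t$ and is distinct from $\pi_x$) forces $(m-2)\nn{q}$ to also be odd. But $m-1$ and $m-2$ have opposite parities, so $(m-1)\nn{q}$ and $(m-2)\nn{q}$ cannot both be odd, a contradiction. Hence $m = 2$ and $\nn{q}$ is odd. The main difficulty is the case analysis of Step 1 to locate $\pi_n$ and $\pi_x$ in consecutive blocks; once Case B is in place, the parity comparisons are forced by the periodic structure of $p$ and the recursive form of $\prec_\sigma$.
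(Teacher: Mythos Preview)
Your Step 1 argument is where the proof breaks down. You claim that if $\pi_x$ and $\pi_n$ lie in the same block $(e_t,e_{t+1}]$, then ``since $x<n$ yet $\pi_x>\pi_n$, that block would be decreasing,'' and further that $\pi_{x+b}$ must satisfy $\pi_n<\pi_{x+b}<\pi_x$ because $x+b$ sits ``at a position between $x$ and $n$.'' This conflates two different objects: the $*$-$\sigma$-segmentation partitions \emph{positions of $\hat\pi^*$}, i.e.\ \emph{values} of $\pi$, not positions of $\pi$. Saying block $t$ is decreasing means $\hat\pi^*_{e_t+1}>\cdots>\hat\pi^*_{e_{t+1}}$; it says nothing about $\pi$ being monotone on the interval of indices $[x,n]$. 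A concrete witness: take $\sigma=+-$, $\pi=12453$, $e_1=2$. Then $x=3$, $b=1$, $m=2$, $q=1$ with $\nn{q}$ odd. Here $\pi_n=3$, $\pi_x=4$, $\pi_{x+b}=5$ all sit in block~$1$ (so $s_x=s_n=1$, already refuting your conclusion that $s_n=t-1$), and $\pi_{x+b}=5$ is \emph{larger} than both $\pi_x$ and $\pi_n$, not between them. Your Step~1 reasoning uses only $m\ge 2$, so this example is a valid counterexample to the argument as written, even though it lies outside the contradiction hypothesis.

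Because the rest of your proof leans on Step~1 (you need $s_n=t-1<t=q_1$ to locate the first disagreement of the tails, and $\pi_x=e_t+1$ to argue $\pi_x<\pi_{x+b}$), the gap is fatal. The paper sidesteps this entirely: it never pins down the block of $\pi_n$. Instead it tracks the chain $\pi_x,\pi_{x+b},\ldots,\pi_n$ directly via the $\pi$-monotone property---each step through $q$ flips the comparison exactly when $\nn{q}$ is odd---and derives a contradiction from $\pi_x=\pi_n+1$ by a case analysis on the resulting monotone or alternating pattern. A secondary issue: your appeal to ``any realization of $\pi$'' is premature, since at this point no realization is known to exist; the argument should stay at the level of the finite $\pi$-monotone word, as the paper does.
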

\begin{proof}
We will prove this for $s^{(1)}$. The proof for $s^{(2)}$ is similar. Assume $p = q^r$ with $|q| = b$. Then if $\nn{q}$ is even and $\pi_x<\pi_{x+b}$, then we must have $\pi_x<\pi_{x+b}<\pi_{x+2b}<\cdots<\pi_{x+rb}= \pi_n$. Since $\pi_x = \pi_n+1$, we have a contradiction. Alternatively, if $\nn{q}$ is even and $\pi_x>\pi_{j+b}$, then we must have $\pi_x>\pi_{x+b}>\pi_{x+2b}>\cdots>\pi_{x+rb}= \pi_n$. Since $\pi_x = \pi_n-1$, we must have $r =1$. 

If $\nn{q}$ is odd and $r>2$ is even, then we can reduce it to the previous case for $q' = qq$. It remains to show that if $\nn{q}$ is odd and $r>2$ is odd, we get a contradiction. There are 4 possibilities:
\begin{itemize}
\item If $\pi_x<\pi_{x+b}$ and $\pi_{x}<\pi_{x+2b}$, then $\pi_x<\pi_{x+b}>\pi_{x+2b}<\cdots >\pi_{n-b}<\pi_n$ (so $\pi_{n-b}<\pi_n$). Also, $\pi_x<\pi_{x+2b}<\pi_{x+4b}<\cdots< \pi_{n-b}$. But $\pi_{n-b}<\pi_n$. So we have $\pi_x<\pi_{x+2b}<\pi_{x+4b}<\cdots< \pi_{n-b}<\pi_n$, which contradicts that $\pi_x= \pi_n+1$.
\item If $\pi_x>\pi_{x+b}$ and $\pi_{x}<\pi_{x+2b}$, then $\pi_x<\pi_{x+2b}<\pi_{x+4b}<\cdots< \pi_{n-b}$. That implies that $\pi_{x+b}>\pi_{x+3b}>\pi_{x+5b}>\cdots> \pi_{n-2b}>\pi_n$. But $\pi_x>\pi_{x+b}$, so $\pi_x>\pi_{x+b}>\pi_{x+3b}>\pi_{x+5b}>\cdots> \pi_{n-2b}>\pi_n$, but $\pi_x =\pi_n+1$, so this implies that $r=1$.
\item If $\pi_x<\pi_{x+b}$ and $\pi_{x}>\pi_{x+2b}$, then $\pi_x>\pi_{x+2b}>\pi_{x+4b}>\cdots>\pi_{n-b}$. That implies that $\pi_{x+b}<\pi_{x+3b}<\pi_{x+5b}<\cdots< \pi_{n-2b}<\pi_n$. But $\pi_x<\pi_{x+b}$, so $\pi_x<\pi_{x+b}<\pi_{x+3b}<\pi_{x+5b}<\cdots<\pi_{n-2b}<\pi_n$, which contradicts that $\pi_x =\pi_n+1$.
\item If $\pi_x>\pi_{x+b}$ and $\pi_{x}>\pi_{x+2b}$, then $\pi_x>\pi_{x+b}<\pi_{x+2b}>\cdots <\pi_{n-b}>\pi_n$ (so $\pi_{n-b}>\pi_n$). Also, $\pi_x>\pi_{x+2b}>\pi_{x+4b}>\cdots> \pi_{n-b}$. But $\pi_{n-b}>\pi_n$. So we have $\pi_x>\pi_{x+2b}>\pi_{x+4b}>\cdots>\pi_{n-b}>\pi_n$, but $\pi_x =\pi_n+1$, so this implies that $r=1$.
\end{itemize}
Therefore we either have $p$ is primitive or $p=q^2$ where $q$ is primitive and $\nn{q}$ is odd. 
\end{proof}

\begin{lemma}\label{lem:s exists}
The pattern of $s$ exists for at least one of $s = s^{(1)}$ or $s = s^{(2)}$.
\end{lemma}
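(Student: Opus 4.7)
The plan is to proceed by contradiction: suppose neither $s^{(1)}$ nor $s^{(2)}$ has a well-defined pattern of length $n$, and derive a contradiction with the standing hypotheses $\hat\pi^*\in\C^{\sigma,*}$ and $(\dagger)$.

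First I would identify the precise failure mode of $s^{(1)}=u\,p^{n-1}\,w_\sigma$ (or $u\,p^{n-1}\,W_\sigma$ in the parity-flipped subcase), where $p=p^{(1)}$ and $b_1=|p|$. Since both $x$ and $n=x+b_1$ lie in $[1,n]$, the two shifts
\[
s^{(1)}_{[x,\infty)}=p^{n-1}w_\sigma\quad\text{and}\quad s^{(1)}_{[n,\infty)}=p^{n-2}w_\sigma
\]
coincide exactly when $p\cdot w_\sigma=w_\sigma$. Combining this with Lemma~\ref{lem:p primitive}, which guarantees that $p$ is either primitive or $p=q^2$ with $q$ primitive and $\nn{q}$ odd (so in the latter case the shifts at $x$ and $x+|q|$ must also be tracked), together with the rigidity of the $\pi$-monotone construction (which rules out collisions between the prefix $u$ and the periodic region), the failure of $s^{(1)}$ reduces to the single identity $q\cdot w_\sigma=w_\sigma$ for the primitive root $q$ of $p$. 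An analogous statement holds for $s^{(2)}$ with $W_\sigma$.

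Next, the explicit forms $w_\sigma\in\{0^\infty,\,0(k-1)^\infty,\,(0(k-1))^\infty\}$ and the corresponding ones for $W_\sigma$ determine when this identity can hold: only $w_\sigma=0^\infty$ (forcing $\sigma_0=+$ and $q=0^{|q|}$) and $w_\sigma=(0(k-1))^\infty$ (forcing $\sigma_0=\sigma_{k-1}=-$ and $q=(0(k-1))^{|q|/2}$) admit any matching periodic $q$; the middle case $w_\sigma=0(k-1)^\infty$ admits none. Hence simultaneous failure forces $\sigma_0=\sigma_{k-1}$. When $\sigma_0=\sigma_{k-1}=+$, the failures impose $s_x\cdots s_{n-1}=0^{b_1}$ and $s_y\cdots s_{n-1}=(k-1)^{b_2}$; together with $\pi_x=\pi_n+1$ and $\pi_y=\pi_n-1$ this forces both $\pi_n\le e_1$ and $\pi_n>e_{k-1}$, contradicting $e_1\le e_{k-1}$. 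When $\sigma_0=\sigma_{k-1}=-$, the failures require $s_{n-1}=k-1$ (from $p^{(1)}$) and $s_{n-1}=0$ (from $p^{(2)}$), an immediate contradiction. The parity-flipped subcase, in which the boundaries $w_\sigma$ and $W_\sigma$ are swapped between $s^{(1)}$ and $s^{(2)}$, is handled by the same argument with the roles exchanged; and in the remaining borderline configurations the joint periodicity of $s_x\cdots s_{n-1}$ and $s_y\cdots s_{n-1}$ produces a block length $b$ witnessing $\st(\pi_{n-2b}\pi_{n-b}\pi_n)\in\{312,132\}$ together with the required segmentation-compatibility, directly contradicting $(\dagger)$.

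The main obstacle is the reduction in the first step: one must verify that the identity $q\cdot w_\sigma=w_\sigma$ really is the \emph{only} way two of the first $n$ shifts of $s^{(1)}$ can coincide. This requires carefully excluding collisions that would equate a shift from inside the prefix $u$ with one from the periodic $p^{n-1}$ region, which uses both the primitivity data of Lemma~\ref{lem:p primitive} and the fact that $s_1\cdots s_n$ was constructed as the $\pi$-monotone word of a genuine $*$-$\sigma$-segmentation of $\hat\pi^*$, so its structure is rigid enough to preclude accidental coincidences.
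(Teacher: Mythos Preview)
Your strategy follows the same case split on $(\sigma_0,\sigma_{k-1})$ as the paper, but you have over-engineered it. The paper's proof never invokes $(\dagger)$ nor conditions (b)--(e) of the $*$-$\sigma$-segmentation. In each of the four cases it simply names the unique short word that $p^{(i)}$ must equal for $\Pat(s^{(i)},\Sigma_\sigma,n)$ to be undefined (for instance $p^{(1)}=0$ when $\sigma_0=+$, or $p^{(1)}=0(k-1)$ when $\sigma_0=\sigma_{k-1}=-$), observes that this pins down $s_{n-1}$ (respectively $s_{n-2}s_{n-1}$), and then notes that $p^{(2)}=s_y\cdots s_{n-1}$ ends in that \emph{same} letter and therefore cannot equal the corresponding short word on the $W_\sigma$ side. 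Your argument in the $\sigma_0=\sigma_{k-1}=+$ case via $\pi_n\le e_1$ and $\pi_n>e_{k-1}$ is valid but misses this one-line incompatibility, and your appeal to $(\dagger)$ for ``borderline configurations'' is unnecessary here and suggests some conflation with the separate role $(\dagger)$ plays in Lemma~\ref{lem:no c}.

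The obstacle you flag at the end---ruling out collisions involving the prefix $u$---is genuine, and the paper is equally terse on exactly this point (it asserts without proof that ``the only way the pattern could be undefined would be if $p^{(1)}=0$''). A direct justification runs as follows: if $s^{(1)}_{[i,\infty)}=s^{(1)}_{[j,\infty)}$ with $1\le i<j\le n$ then $s^{(1)}_{[i,\infty)}$ has period $j-i$, and since its far tail agrees with the periodic word $w_\sigma$, backward induction forces $s^{(1)}_\ell$ to match the corresponding letter of $w_\sigma$ for every $\ell\ge i$; in particular the entire second copy of $p$ inside $s^{(1)}$ lies in this range, so $p$ is a power of the fundamental period of $w_\sigma$, and Lemma~\ref{lem:p primitive} together with the parity constraint on $\nn{p}$ then forces $p$ to be that period itself. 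When $w_\sigma$ (respectively $W_\sigma$) is aperiodic this already yields a contradiction, so in the mixed-sign cases only one of $s^{(1)},s^{(2)}$ can fail. No appeal to the structure of $u$ or to $(\dagger)$ is required.
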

\begin{proof}
We prove this by cases. 

{\it Case 1.} Suppose $\sigma_0 = \sigma_{k-1} =+$. Then $w_\sigma = 0^\infty$ and $W_\sigma = (k-1)^\infty$. The only way the pattern could be undefined would be if $p^{(1)} = 0$ or if $p^{(2)}=k-1$. Notice that in both cases, $\nn{p^{(i)}}=0$ is even. 
If $p^{(1)} = 0$, the pattern of $s^{(1)}$ cannot exist since in this case we would have $s^{(1)}_{[n-1,\infty)} = 0^\infty$, and so $s^{(1)}_{[n-1,\infty)}$ and $s^{(1)}_{[n,\infty)}$ would be incomparable. However, in this case, the pattern of $s^{(2)}$ must be defined since $p^{(2)} \neq k-1$ since $s_{n-1} = 0$ and so $p^{(2)}$ must end in 0. Similarly, if $p^{(2)} = k-1$, then the patterns of $s^{(2)}$ does not exist, but the pattern of $s^{(1)}$ does.

{\it Case 2.} Suppose $\sigma_0=+$ and $\sigma_{k-1} = -$. Then $w_\sigma = 0^\infty$ and $W_\sigma = (k-1)0^\infty$. The only way the pattern could be undefined in this case is if $p^{(1)} = 0$ (since $W_\sigma$ is not periodic). For the same reasons as in Case 1, the pattern of $s^{(1)}$ will not be defined, but the pattern of $s^{(2)}$ will be.  

{\it Case 3.} Suppose $\sigma_0=-$ and $\sigma_{k-1} = +$. Then $w_\sigma = 0(k-1)^\infty$ and $W_\sigma = (k-1)^\infty$. The only way the pattern could be undefined in this case is if $p^{(2)} = k-1$ (since $w_\sigma$ is not periodic). For the same reasons as in Case 1, the pattern of $s^{(2)}$ will not be defined, but the pattern of $s^{(1)}$ will be.  

{\it Case 4.} Suppose $\sigma_0=\sigma_{k-1} = -$. Then $w_\sigma = (0(k-1))^\infty$ and $W_\sigma = ((k-1)0)^\infty$. The only way the pattern could be undefined in this case is if $p^{(1)} = 0(k-1)$ or if $p^{(2)} = (k-1)0$. In both cases $\nn{p^{(i)}}$ is even. The proof is the same as in Case 1. 
\end{proof}

In the remaining lemmas, we show that if the pattern of $s = s^{(1)}$ exists, then the pattern is $\pi$. The proof for $s^{(2)}$ is very similar. For the next several lemmas, assume that $s = s^{(1)}$ and that $x,p,u,$ and $n$ are all defined as in the definition of $s^{(1)}$.

\begin{lemma}\label{lem:n<x}
$s_{[n,\infty)}\prec_\sigma s_{[x,\infty)}$
\end{lemma}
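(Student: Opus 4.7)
The plan is to rewrite both infinite words in terms of the period $p$ and the appended tail (which is $w_\sigma$ or $W_\sigma$, chosen by parity in the definition of $s^{(1)}$), and then reduce the desired inequality to a comparison involving the extremal tail via a stripping identity for $\prec_\sigma$.

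Since $s = s^{(1)} = u p^{n-1} \mathrm{T}$ with $|u| = x-1$ and $|p| = n-x$, direct inspection gives
\begin{equation*}
s_{[x,\infty)} = p^{n-1}\cdot \mathrm{T} \qquad \text{and} \qquad s_{[n,\infty)} = p^{n-2}\cdot \mathrm{T},
\end{equation*}
where $\mathrm{T}$ denotes the tail. Thus both words share the prefix $p^{n-2}$; after it, $s_{[x,\infty)}$ continues with $p\cdot \mathrm{T}$ while $s_{[n,\infty)}$ continues with $\mathrm{T}$. I will record the general stripping identity: for any finite word $q$ and distinct infinite words $X \neq Y$, one has $qX \prec_\sigma qY$ if and only if either $\nn{q}$ is even and $X \prec_\sigma Y$, or $\nn{q}$ is odd and $X \succ_\sigma Y$. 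This is immediate from the definition of $\prec_\sigma$: the common prefix $q$ contributes exactly $\nn{q}$ minus-signs to the count that determines the comparison at the first position where $qX$ and $qY$ differ.

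Applying the identity with $q = p^{n-2}$, so that $\nn{q} = (n-2)\nn{p}$, the desired $s_{[n,\infty)} \prec_\sigma s_{[x,\infty)}$ becomes $\mathrm{T} \prec_\sigma p\cdot \mathrm{T}$ when $(n-2)\nn{p}$ is even and $\mathrm{T} \succ_\sigma p\cdot \mathrm{T}$ when $(n-2)\nn{p}$ is odd. The two cases in the definition of $s^{(1)}$ align with these parities exactly: if $n$ is even or $\nn{p}$ is even then $(n-2)\nn{p}$ is even and $\mathrm{T} = w_\sigma$; otherwise $n$ and $\nn{p}$ are both odd, so $(n-2)\nn{p}$ is odd and $\mathrm{T} = W_\sigma$. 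In the first case $w_\sigma \prec_\sigma p\cdot w_\sigma$ holds because $w_\sigma$ is the $\prec_\sigma$-minimum, and in the second case $W_\sigma \succ_\sigma p\cdot W_\sigma$ holds because $W_\sigma$ is the $\prec_\sigma$-maximum. Strictness in both cases follows from $\mathrm{T} \neq p\cdot \mathrm{T}$, which is forced by $s_{[x,\infty)} \neq s_{[n,\infty)}$: the two shifts are distinct because $\Pat(s,\Sigma_\sigma,n)$ exists by Lemma~\ref{lem:s exists}.

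The main step requiring care is the stripping identity together with the parity bookkeeping; once that is established, the lemma collapses into the extremality properties of $w_\sigma$ and $W_\sigma$ combined with the deliberate parity-matched choice of tail in the definition of $s^{(1)}$.
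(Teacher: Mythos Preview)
Your proof is correct and follows essentially the same approach as the paper: both identify $s_{[n,\infty)}=p^{n-2}\mathrm{T}$ and $s_{[x,\infty)}=p^{n-1}\mathrm{T}$, strip the common prefix $p^{n-2}$ using the parity of $\nn{p^{n-2}}=(n-2)\nn{p}$, and reduce to the extremality of $w_\sigma$ or $W_\sigma$. Your version is slightly more careful in that you explicitly justify strictness via Lemma~\ref{lem:s exists}, whereas the paper leaves the inequality ``trivially true'' without noting why equality cannot occur.
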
 
\begin{proof}
By definition of $s= s^{(1)}$, this is equivalent to $p^{n-2}w_\sigma \prec_\sigma p^{n-1}w_\sigma$ when $n$ is even or $\nn{p}$ is even and $p^{n-2}W_\sigma \prec_\sigma p^{n-1}W_\sigma$ when both $n$ and $\nn{p}$ are odd. In the first case when $n$ is even or if $\nn{p}$ is even, then $\nn{p^{n-2}}$ is even and so $p^{n-2}w_\sigma \prec_\sigma p^{n-1}w_\sigma$ if and only if $w_\sigma \prec_\sigma pw_\sigma$, which is trivially true since $w_\sigma$ is the smallest word with respect to $\prec_\sigma$. Similarly, if $n$ is odd and $\nn{p}$ is odd, then $\nn{p^{n-2}}$ is odd and so $p^{n-2}W_\sigma \prec_\sigma p^{n-1}W_\sigma$ if and only if $W_\sigma \succ_\sigma pW_\sigma$, which is also trivially true since $W_\sigma$ is the largest word with respect to $\prec_\sigma$.
\end{proof}

\begin{lemma}\label{lem:no c}
There is no $1\leq c\leq n$ such that $s_{[n,\infty)}\prec_\sigma s_{[c,\infty)}\prec_\sigma s_{[x,\infty)}$. 
\end{lemma}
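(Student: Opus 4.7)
The plan is to argue by contradiction. Suppose $1\le c\le n$ with $c\notin\{x,n\}$ and $s_{[n,\infty)}\prec_\sigma s_{[c,\infty)}\prec_\sigma s_{[x,\infty)}$. By construction of $s^{(1)}$, we have $s_{[n,\infty)}=p^{n-2}X$ and $s_{[x,\infty)}=p^{n-1}X$, where $X\in\{w_\sigma,W_\sigma\}$ is chosen by the parities of $n$ and $\nn{p}$; in particular, these two infinite words share the identical prefix $p^{n-2}$ of length $L:=(n-2)|p|$.

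The first step is to show that $s_{[c,\infty)}$ must also begin with this common prefix $p^{n-2}$. If instead $s_{[c,\infty)}$ first deviates from $p^{n-2}$ at some position $j\le L$, then the first discrepancies of the two pairs $(s_{[n,\infty)},s_{[c,\infty)})$ and $(s_{[c,\infty)},s_{[x,\infty)})$ both occur at this same $j$, and because $s_{[n,\infty)}$ and $s_{[x,\infty)}$ agree everywhere up to position $L$, the sign-parity of the shared length-$(j-1)$ prefix is identical in both comparisons. A direct check of the definition of $\prec_\sigma$ then forces $s_{[c,\infty)}$ onto the same side of both $s_{[n,\infty)}$ and $s_{[x,\infty)}$, contradicting strict betweenness.

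The second step classifies the positions $c\in\{1,\ldots,n\}$ whose suffix begins with $p^{n-2}$. A Fine--Wilf type periodicity argument combined with Lemma \ref{lem:p primitive} shows that in the primitive case, $c-x$ must be a multiple of $|p|$, leaving only $c\in\{x,n\}$ — both excluded — so no such $c$ exists. In the non-primitive case $p=q^2$ with $q$ primitive and $\nn{q}$ odd, the residue $(c-x)\bmod|p|$ may additionally equal $|q|$, yielding finitely many extra candidates whose suffixes have the form $p^{m}X$ for $m\ge n$, or $q\cdot p^{m}X$ for $m\ge n-2$.

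The final step rules out each remaining candidate, and is where the main technical difficulty lies. For a candidate of the form $p^{m}X$ with $m\ge n$, I would analyze the $\prec_\sigma$-behavior of the chain $(p^{m}X)_{m\ge 0}$: the choice of $X\in\{w_\sigma,W_\sigma\}$ in the definition of $s^{(1)}$ is calibrated precisely to the parities of $n$ and $\nn{p}$ so that this chain marches monotonically through the step $p^{n-2}X\prec_\sigma p^{n-1}X$ guaranteed by Lemma \ref{lem:n<x}, placing every other $p^{m}X$ on the far side of $p^{n-1}X$. For $c=x+|q|$ the identities $s_{[c,\infty)}=q\cdot s_{[n,\infty)}$ and $s_{[x,\infty)}=q\cdot s_{[c,\infty)}$ hold; setting $A=s_{[n,\infty)}$, the rightmost inequality $qA\prec_\sigma q^2A$ shifts by $|q|$ into $A\succ_\sigma qA$ (order flipped by odd $\nn{q}$), directly contradicting $A\prec_\sigma qA$. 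The remaining $q\cdot p^{m}X$ candidates reduce to a similar odd-shift flip. The main obstacle is the parity bookkeeping in the monotonicity step: one must case-split on the four sign patterns $(\sigma_0,\sigma_{k-1})$ determining $w_\sigma$ and $W_\sigma$, track the first discrepancy $X_1$ versus $p_1$, and handle the degenerate subcase $X_1=p_1$ by iterating one more step into the tail of $s$.
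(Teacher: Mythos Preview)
Your Step~1 and the non-primitive half of Steps~2--3 are fine, but the primitive case has a real gap. Fine--Wilf only forces $c\equiv x\pmod{|p|}$; it does \emph{not} pin $c$ down to $\{x,n\}$. If $u$ happens to end in $p$ (nothing you have said forbids this), then $c=x-|p|=2x-n$ is a legitimate candidate with $s_{[c,\infty)}=p^{n}X$. Your ``monotone chain'' argument now fails precisely when $\nn{p}$ is odd: with $n$ even (so $X=w_\sigma$) one has $\nn{p^{n-2}}$ even and $\nn{p^{n-1}}$ odd, giving
\[
p^{n-2}w_\sigma \prec_\sigma p^{n}w_\sigma \prec_\sigma p^{n-1}w_\sigma,
\]
and the analogous computation with $X=W_\sigma$ when $n$ is odd gives the same strict betweenness. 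So the chain $(p^mX)_m$ is \emph{not} monotone in this regime, and $p^nX$ sits exactly where you do not want it. (In the non-primitive case $p=q^2$ you are saved because $\nn{p}=2\nn{q}$ is automatically even, which is why your argument goes through there.)

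The paper handles this case not by analyzing the chain but by invoking the hypothesis~$(\dagger)$, which you never use: if $u$ ends in $p$, set $b=|p|$ and $z=n-2b$; then $s_{[z,n-1]}=p^2$, so the ``same-bucket'' clause of~$(\dagger)$ holds, and combining $\pi_x=\pi_n+1$ with the odd-$\nn{p}$ flip forces $\st(\pi_z\pi_x\pi_n)\in\{132,312\}$, contradicting~$(\dagger)$. In other words, the role of~$(\dagger)$ is exactly to exclude the possibility that $u$ ends in $p$ when $\nn{p}$ is odd, thereby killing the candidate $c=x-|p|$ before any chain analysis is needed. Your proposal cannot close the argument without appealing to~$(\dagger)$ (or something equivalent) at this point.
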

\begin{proof} We prove this by cases. 
%by cases?

{\it Case 1.} Assume $p$ is primitive and either $n$ is even or $\nn{p}$ is even. If there were such a $c$, then we would have $p^{n-2}w_\sigma\prec_\sigma s_{[c,\infty)}\prec_\sigma p^{n-1}w_\sigma$. Therefore, $s_{[c,\infty)} = p^{n-2}v$ for some word $v$ such that $w_\sigma \prec_\sigma v \prec_\sigma pw_\sigma$ (since either $n$ is even or $\nn{p}$ is even). Since $p$ is primitive, there can be no overlap of the first $p$ in $s_{[c,\infty)}$ with both the first and second occurrence of $p$ in $s_{[x,\infty)}$. Since $c \neq x,n$, then we must have $c<x$. 

If some of the occurrences of $p$ in $s_{[c,\infty)}$ overlap with those in $s_{[x,\infty)}$, then $v$ must start with $p$. If $\nn{p}$ is even, this contradicts that $v \prec_\sigma pw_\sigma$ since $pw_\sigma$ is the smallest word starting with $p$. 
If $\nn{p}$ is odd, then we would have that $u$ ends in $p$. Therefore $s = up^{n-1}w_\sigma = u'p^nw_\sigma$ for some $u'$. Suppose $z = n - 2(n-x)$. Then $s_{[z,\infty)} = p^nw_\sigma$. Recall that $s_1\ldots s_{n-1}$ were chosen to be the $\pi$-monotone word obtained from a $\sigma$-segmentation of $\pihat^*$ satisfying $(\dagger)$. Therefore, since $\pi_x>\pi_n$, we cannot have that $\pi_n>\pi_z$. Since $\pi_x = \pi_n +1$, then we must have that $\pi_n<\pi_x<\pi_z$. We then have that $\pi_x<\pi_z$, $\nn{p}$ is odd, and for all $1\leq i \leq n-x$, $e_t< \pi_{x+i}\leq e_{t+1}$ if and only if $e_t<\pi_{z+i}\leq e_{t+1}$ for some $t$. Together, these imply that $\pi_n>\pi_x$, which is a contradiction. 

If there are no occurrences of $p$ in $s_{[c,\infty)}$ which overlap with those in $s_{[x,\infty)}$, then we must have that $c = 1$, $u = p^{n-2}$ and $v = s_{[x,\infty)}$. In this case, we still have the same issues as above. 

{\it Case 2.} Assume $p$ is primitive, $n$ is odd and $\nn{p}$ is odd. The proof is very similar to the situation in Case 1 above when $\nn{p}$ is odd.

{\it Case 3.} Assume $p$ is not primitive. Then $p=q^2$ where $q$ is primitive and $\nn{q}$ is odd. Here, $\nn{p}$ is always even. We would have that $q^{2n-4}w_\sigma\prec_\sigma s_{[c,\infty)}\prec_\sigma q^{2n-2}w_\sigma$ and so $s_{[c,\infty)} = q^{2n-4}v$ for some word $v$ such that $w_\sigma \prec_\sigma v \prec_\sigma q^2w_\sigma$. If we had that $c>x$, then there is exactly one place it could be. But then $v = qw_\sigma$, the largest word that starts with $q$. This violates the fact that $v \prec_\sigma q^2w_\sigma$. Therefore $c<x$. 

Since $q^2w_\sigma$ is the smallest word that starts with $q^2$, there can be no overlap between the occurrences of $q$ in $s_{[c,\infty)}$ and $s_{[x,\infty)}$ since otherwise, $v$ would start with $q^2$. Additionally, since $q^{2n-4}$ has size at least $2(n-2)>n-2$ and so would be longer than all of $u$. 
\end{proof}

\begin{lemma}\label{lem:pat is pi}
$\Pat(s, \Sigma_\sigma, n) = \pi$
\end{lemma}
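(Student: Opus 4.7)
The plan is to verify that $\pi_i < \pi_j$ if and only if $T_i := s_{[i,\infty)} \prec_\sigma T_j := s_{[j,\infty)}$ for every pair $i,j \in [n]$. Since the pattern $\Pat(s,\Sigma_\sigma,n)$ is defined by Lemma \ref{lem:s exists}, the $T_i$ are pairwise distinct and totally ordered by $\prec_\sigma$, so establishing this equivalence identifies the pattern as $\pi$.

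I would first dispose of the cross-segment case. By construction of the $\pi$-monotone word, $s_c$ coincides with the segment index of $\pi_c$ for every $c\in[n-1]$, so whenever $s_i\ne s_j$ the first-character comparison in $\prec_\sigma$ decides $T_i$ versus $T_j$ in agreement with $\pi_i<\pi_j \iff s_i<s_j$. For pairs $i,j\in[n-1]$ in the same segment I would iterate as in the proof of Lemma \ref{lem:pin=1}, comparing $T_{i+1}$ with $T_{j+1}$ and flipping the sign whenever $\sigma_{s_i}=-$. Condition (a) of the $*$-$\sigma$-segmentation gives the analogous reduction on the $\pi$-side: $\pi_i<\pi_j \iff \pi_{i+1}<\pi_{j+1}$ in a $+$ segment, with the reversed inequality in a $-$ segment. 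Repeating this reduction, one eventually produces either a pair of positions in different segments (settled above) or a pair in which one position is $n$.

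The essential new case is therefore comparisons of the form $(n,c)$ with $c\in[n-1]$, and here the construction of $s^{(1)}$ and the condition $(\dagger)$ play their role. When $c=x$, Lemma \ref{lem:n<x} gives $T_n\prec_\sigma T_x$, which matches $\pi_n<\pi_x$. When $c\neq x$, Lemma \ref{lem:no c} forces either $T_c\prec_\sigma T_n$ or $T_c\succ_\sigma T_x$, so nothing lies strictly between $T_n$ and $T_x$ in the $\prec_\sigma$-order. Because $\pi_x=\pi_n+1$ and $c\neq x,n$, the dichotomy $\pi_c<\pi_n$ vs.\ $\pi_c>\pi_n$ coincides with $\pi_c<\pi_x$ vs.\ $\pi_c>\pi_x$, so the comparison of $T_n$ with $T_c$ is equivalent to the comparison of $T_x$ with $T_c$, both of whose positions lie in $[n-1]$ and for which the iteration scheme of the previous paragraph applies.

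The primary obstacle I expect is the circularity in this final reduction: iterating the pair $(c,x)$ may itself drive one position to $n$, producing a new $(n,c')$ comparison of precisely the kind we are trying to resolve. I would handle this by a well-founded induction on the minimal number of $\Sigma_\sigma$-iterations required to produce a first-character mismatch for the pair in question; this quantity is finite because the pattern exists and so the $T_i$ are distinct. Each step of the induction either concludes via a first-character comparison or lands in the rigidly positioned gap identified by Lemma \ref{lem:no c}, with the sign parities tracked via $\nn{\cdot}$ exactly as in Lemma \ref{lem:pin=1}. The proof for $s=s^{(2)}$ is strictly symmetric, with the roles of $x$ and $w_\sigma$ replaced by $y$ and $W_\sigma$ throughout.
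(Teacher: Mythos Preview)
Your proposal is correct and follows essentially the same approach as the paper's proof: reduce comparisons involving position $n$ to comparisons involving position $x$ (using $\pi_x=\pi_n+1$ together with Lemmas~\ref{lem:n<x} and~\ref{lem:no c}), and otherwise iterate within $[n-1]$ exactly as in Lemma~\ref{lem:pin=1}. Your explicit invocation of Lemma~\ref{lem:no c} and your handling of the circularity via a well-founded induction on the first-mismatch index are more careful than the paper's informal ``whenever we reach $n$, return to $s_{[x,\infty)}$ and $\pi_x$'', but the underlying argument is identical.
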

\begin{proof}
To do this we show that $\pi_i<\pi_j$ implies $s_{[i,\infty)}\prec_\sigma s_{[j,\infty)}$. 
In Lemma \ref{lem:n<x}, we have shown this is true for the case when $i = n$ and $j = x$.
%We will use a method of proof similar to that in \cite[Prop 2.12]{Elishifts}. 
%
%Let $S(i,j)$ be the statement that ``$\pi_i<\pi_j$ implies $s_{[i,\infty)}\prec_\sigma s_{[j,\infty)}$''. There are three possible cases

Suppose $\pi_i<\pi_j$. If there is some $t$ so that $\pi_i\leq e_t<\pi_j$, then we are done since this implies that $s_i<s_j$ since $s_1s_2\ldots s_n$ were chosen to be $\pi$-monotone with respect to the $*$-$\sigma$-segmentation $0=e_0\leq \cdots \leq e_k = n$.
Otherwise, there is some $t$ so that $e_t \leq \pi_i<\pi_j<e_{t+1}$ and thus $s_i = s_j = t$. 
If $i = n$, then since $\pi_x = \pi_n+1$, $\pi_i<\pi_j$ is equivalent to $\pi_x<\pi_j$ (assuming $j \neq x$, since we already have proven this case). Similarly, if $j = n$, $\pi_i<\pi_j$ is equivalent to $\pi_i<\pi_x$. Therefore it is enough to show that $\pi_i<\pi_j$ implies $s_{[i,\infty)}\prec_\sigma s_{[j,\infty)}$ for $i, j <n$. 
%Let $S(i,j)$ be the statement that ``$\pi_i<\pi_j$ implies $s_{[i,\infty)}\prec_\sigma s_{[j,\infty)}$''. We want to show that $S(i,j)$ is true for all $1\leq i,j<n$ with $i\neq j$. The previous statement says that we've reduced $S(n,j)$ to $S(x,j)$ (when $j \neq x$) and $S(i,n)$ to $S(i,x)$. If we assume that $s_i = s_j = t$, then we can reduce $S(i,j)$ to $S(i+1, j+1)$ when 

If $\pi_i<\pi_j$, let $m$ be so that $s_i\cdots s_{i+m-1} = s_j\cdots s_{j+m-1}$ and $s_{i+m} \neq s_{j+m}$. First, assume that $i+m, j+m \leq n$. Since for all $1\leq \ell <m$, there is always some $t$ so that $e_t \leq \pi_{i+\ell}, \pi_{j+\ell}<e_{t+1}$, by definition of $\S^\sigma$, when $t \in T^+$, we have $\pi_{i+\ell} <\pi_{j+\ell}$ if and only if $\pi_{i+\ell+1} <\pi_{j+\ell+1}$ and when $t \in T^-$, we have $\pi_{i+\ell} <\pi_{j+\ell}$ if and only if $\pi_{i+\ell+1} >\pi_{j+\ell+1}$.

Suppose $\nn{s_i\cdots s_{i+m-1}}$ is even. 
Then we have that $\pi_{i}<\pi_{j}$ if and only if $\pi_{i+m}<\pi_{j+m}$ since the inequality changes and even number of times. Since $s_{i+m} \neq s_{j+m}$, this implies that $s_{i+m} <s_{j+m}$, and thus $s_{[i+m,\infty)}\prec_\sigma s_{[j+m,\infty)}$. However, we have $s_{[i,\infty)}\prec_\sigma s_{[j,\infty)}$ if and only if $s_{[i+m,\infty)}\prec_\sigma s_{[j+m,\infty)}$ since we have the inequality $\prec_\sigma$ switch an even number of times.

 Similarly, if $\nn{s_i\cdots s_{i+m-1}}$ is odd, $\pi_i<\pi_j$ if and only if $\pi_{i+m}>\pi_{j+m}$, which implies that $s_{i+m}>s_{j+m}$. This in turn implies that $s_{[i+m,\infty)}\succ_\sigma s_{[j+m,\infty)}$, which happens if and only if $s_{[i,\infty)}\prec_\sigma s_{[j,\infty)}$.
If we do not have that $i+m, j+m \leq n$, then whenever we ever reach $n$, we can return back to $s_{[x,\infty)}$ and $\pi_x$ by the second paragraph of this proof.
\end{proof}

\begin{proof}[Proof of Theorem \ref{thm:desc allowed}.]
%First take a $\sigma$-segmentation of $\pi$ specified by ($\dagger$). Let $s_1\cdots s_n$ be the $\pi$-monotone word associated to this $\sigma$-segmentation.%, so that there exists no $b$ such that  $\pi_{n-2b}\pi_{n-b}\pi_n = 312$ or $132$, and $e_t< \pi_{n-2i}\leq e_{t+1}$ if and only if $e_t<\pi_{n-i}\leq e_{t+1}$ for all $1\leq i \leq b$
The forward direction of the proof follows directly from Lemmas \ref{lem:pihat in C} and \ref{lem:pi dagger}. The reverse direction follows from Lemma \ref{lem:pin=1}, when $\pi_n = 1$, from Lemma \ref{lem:pin=n} when $\pi_n = n$, and from Lemmas \ref{lem:p primitive}, \ref{lem:s exists}, \ref{lem:n<x}, \ref{lem:no c}, and \ref{lem:pat is pi} when $1<\pi_n<n$ since we show that there does exist some word so that when $\pi$ satisfies the conditions of the theorem, the pattern of the word is in fact $\pi$. 
\end{proof}

\section{Enumerating allowed patterns of signed shifts}\label{sec:enum allowed}

\subsection{Bounds for the general signed shift}

In this section, we provide an upper bound on the size of the set $\Allow_n(\Sigma_\sigma)$ of allowed patterns for the signed shift. Let $$a(n,k) = \sum_{t=1}^{n-1} \psi_k(t) k^{n-t-1}$$ where $\psi_k(t)$ is the number of primitive words on $k$ letters of length $t$. Notice that $a(n,k)$ counts the number of ways to write a word of length $n-1$ on $k$ letters as the $up$ where $|u| = n-t-1$, $|p| = t$, and $p$ is primitive. We will define an {\em allowed interval for length $n$} as an open interval where the endpoints are eventually periodic points that are of the form $s_1\cdots s_t(s_{t+1} \cdots s_{n-1})^\infty$,  $s_1\cdots s_{n-1} w_\sigma$, or $s_1\cdots s_{n-1}W_\sigma$. The allowed intervals are exactly the connected components of the sets 
$$A^\sigma_\pi = \{s \in \W_k : \Pat(s,\Sigma_\sigma, n) = \pi\}.$$ 
This means that for each allowed interval for length $n$, the pattern of length $n$ realized by each point in the interval is the same.
These allowed intervals partition the domain and do not contain any endpoints of other allowed intervals. Notice that $a(n,k)$ is the number of endpoints of the form $s_1\cdots s_t(s_{t+1} \cdots s_{n-1})^\infty$.

\begin{theorem}\label{thm:bounds}
There are three possible cases.
\begin{itemize}
\item If $\sigma_0 = \sigma_{k-1} = +$, then $|\Allow_n(\Sigma_\sigma)| \leq a(n,k) + (k-2)k^{n-2}.$
\item If $\sigma_0 \neq \sigma_{k-1}$, then $|\Allow_n(\Sigma_\sigma)| \leq a(n,k) + (k-1)k^{n-2}.$
 \item If $\sigma_0 = \sigma_{k-1} = -$, then $|\Allow_n(\Sigma_\sigma)| \leq a(n,k) + (k^2-2)k^{n-3}.$
\end{itemize}
\end{theorem}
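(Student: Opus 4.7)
The plan is to upper-bound $|\Allow_n(\Sigma_\sigma)|$ by the number of non-empty allowed intervals. Since the endpoints of types (i)--(iii) described before the theorem partition the totally ordered domain $(\W_k, \prec_\sigma)$ and include both extremes $w_\sigma$ and $W_\sigma$ (each of which is itself a type-(i) endpoint $up^\infty$ with $|u|+|p|=n-1$ in every case), a set of $E$ distinct endpoints produces exactly $E-1$ open subintervals. Subtracting those that contain no word of $\W_k$ gives
\[
|\Allow_n(\Sigma_\sigma)| \;\le\; (E - 1) - (\text{empty intervals}).
\]

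The key uniform fact is that the number of empty intervals equals $k^{n-1}-1$, independently of $\sigma$. Via the order-isomorphism $\psi_\sigma$ of Section 1.3, adjacencies in $(\W_k, \prec_\sigma)$ correspond bijectively to adjacencies in $(\W_k, \lex)$, and the latter occur only for pairs $(v(k-1)^\infty, v'0^\infty)$ where $v'$ is obtained from $v$ by incrementing the last non-$(k-1)$ letter. Each such lex-adjacency corresponds to a dyadic point $j/k^{|v|}$ in $(0,1)$, and both sides lie in our endpoint set precisely when $|v|\le n-1$, so the empty intervals are indexed by the $k^{n-1}-1$ dyadic fractions with denominator at most $k^{n-1}$ in $(0,1)$. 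Therefore $|\Allow_n(\Sigma_\sigma)| \le E - k^{n-1}$.

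It then remains to compute $E$ in each case by analyzing when a type-(ii) endpoint $s_1\cdots s_{n-1}w_\sigma$ already coincides with a type-(i) endpoint, and similarly for type-(iii). Such a coincidence happens exactly when the pre-period of the eventually periodic point is at most $n-1-\ell$, where $\ell$ is the length of the primitive period of $w_\sigma$ (resp.\ $W_\sigma$); this forces the trailing letters $s_{n-\ell}\cdots s_{n-1}$ to equal the appropriate rotation of that period. In Case 1 ($w_\sigma=0^\infty$ and $W_\sigma=(k-1)^\infty$, each of period length $\ell=1$), the overlap on each side is $k^{n-2}$, so $E=a(n,k)+2(k-1)k^{n-2}$ and the bound becomes $a(n,k)+(k-2)k^{n-2}$. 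In Case 2, one extreme still has $\ell=1$, while the other, of the form $(k-1)0^\infty$ or $0(k-1)^\infty$, has pre-period exactly $n$ and admits no type-(i) representation; hence $E=a(n,k)+(k-1)k^{n-2}+k^{n-1}=a(n,k)+(2k-1)k^{n-2}$ and the bound is $a(n,k)+(k-1)k^{n-2}$. In Case 3 (both extremes of period length $\ell=2$, namely $(0(k-1))^\infty$ and $((k-1)0)^\infty$), the overlap on each side is $k^{n-3}$, so $E=a(n,k)+2(k^2-1)k^{n-3}$ and the bound is $a(n,k)+(k^2-2)k^{n-3}$.

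The main obstacle is establishing rigorously that all $k^{n-1}-1$ dyadic lex-adjacencies do lift under $\psi_\sigma^{-1}$ to $\prec_\sigma$-adjacencies with both sides in our type (i)/(ii)/(iii) endpoint set; this requires tracking how $\psi_\sigma^{-1}$ converts a lex word ending in $0^\infty$ or $(k-1)^\infty$ into an eventually-$w_\sigma$ or eventually-$W_\sigma$ word in $\prec_\sigma$, depending on the parity of the number of $T^-_\sigma$-letters in the preimage prefix. A secondary technicality in Case 3 is confirming that the type-(ii) endpoints $s_1\cdots s_{n-1}w_\sigma$ are disjoint from the type-(iii) endpoints $s_1\cdots s_{n-1}W_\sigma$ after removing the type-(i) overlaps counted separately, which follows since the periods $0(k-1)$ and $(k-1)0$ differ at both positions whenever $k\ge 2$.
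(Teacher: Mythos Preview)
Your approach and the paper's are essentially the same idea—bound $|\Allow_n(\Sigma_\sigma)|$ by the number of allowed intervals—but you organize the count differently. The paper works implicitly in the interval picture and simply asserts that the number of allowed intervals equals $a(n,k)$ plus the stated correction, with only a one-line justification in each case. You instead count all endpoint \emph{words} $E$ in $(\W_k,\prec_\sigma)$ and then subtract $k^{n-1}$ to account for the empty subintervals coming from $\prec_\sigma$-adjacent pairs; the arithmetic then reproduces the paper's three formulas. Your version is more explicit, and in particular it makes transparent why the same quantity $a(n,k)$ appears in all three cases while the correction changes with $\sigma_0,\sigma_{k-1}$.

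Two remarks on your write-up. First, the ``main obstacle'' you flag is lighter than you suggest, because only one direction is needed for the inequality. Since $\psi_\sigma$ is an order isomorphism, for each fixed prefix $s_1\cdots s_{n-1}$ the pair $\{s_1\cdots s_{n-1}w_\sigma,\ s_1\cdots s_{n-1}W_\sigma\}$ consists of the $\prec_\sigma$-minimum and $\prec_\sigma$-maximum among words with that prefix, so $\psi_\sigma$ sends this pair to $\{a_1\cdots a_{n-1}0^\infty,\ a_1\cdots a_{n-1}(k-1)^\infty\}$, and as $s_1\cdots s_{n-1}$ ranges over all words so does $a_1\cdots a_{n-1}$. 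Hence $\psi_\sigma$ carries the type-(ii)$\cup$(iii) endpoints \emph{onto} $\{v0^\infty:|v|=n-1\}\cup\{v(k-1)^\infty:|v|=n-1\}$, which already contains both sides of every lex-adjacency of denominator at most $k^{n-1}$. That gives at least $k^{n-1}-1$ empty intervals, which is all the upper bound requires; you do not need to rule out further adjacencies among type-(i) images. Second, your parenthetical that $w_\sigma$ and $W_\sigma$ are themselves type-(i) endpoints is fine for $n\ge 3$ (and the Case~3 formula only makes sense there anyway), but fails for $n=2$ in Case~3; you may want to state the ambient hypothesis $n\ge 3$.
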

\begin{proof}
The idea of the proof is that $|\Allow_n(\Sigma_\sigma)|$ is bounded above by the number of allowed intervals for length $n$. We count the number of these allowed intervals and to get our upper bound. If $\sigma_0 = \sigma_{k-1} = +$, then $w_k = 0^\infty$ and $W_k = (k-1)^\infty$. If $s_{n-1} = 0$ or $k-1$, the endpoints $s_1\cdots s_{n-1} w_\sigma$ and $s_1\cdots s_{n-1}W_\sigma$ respectively, are already of the form $s_1\cdots s_t(s_{t+1} \cdots s_{n-1})^\infty$. We need to add one for every word $s_1\cdots s_{n-1}$ that does not end in 0 or $k-1$ to get an accurate count of the number of allowed intervals. There are $(k-2)k^{n-2}$ such words. 

If $\sigma_{0} = \sigma_{k-1} = -$, then $w_k = (0(k-1))^\infty$ and $W_k = ((k-1)0)^\infty$. If $s_{n-2}s_{n-1} = 0(k-1)$ or $(k-1)0$, then the endpoints $s_1\cdots s_{n-1} w_\sigma$ and $s_1\cdots s_{n-1}W_\sigma$ respectively, are already of the form $s_1\cdots s_t(s_{t+1} \cdots s_{n-1})^\infty$. Therefore, we only need to add one whenever $s_1\cdots s_{n-1}$ is such that $s_{n-2}s_{n-1} \neq 0(k-1)$ or $(k-1)0$. There are $(k^2-2)k^{n-3}$ such words.

If $\sigma_0 = +$ and $\sigma_{k-1} = -$, then $w_k = 0^\infty$ and $W_k = 10^\infty$. If $s_{n-1} = 0$ then $s_1\cdots s_{n-1} w_\sigma$ is already of the form $s_1\cdots s_t(s_{t+1} \cdots s_{n-1})^\infty$. So, we only need to add one when $s_{n-1} \neq 0$. There are $(k-1)k^{n-2}$ such words. Similarly for when $\sigma_0 = -$ and $\sigma_{k-1} = +$. 
\end{proof}

\subsection{Bounds for the tent map}

In the case of the tent map, $\tent = \Sigma_{+-}$, we can find slightly better bounds. First, we prove the following lemma. 
\begin{lemma}\label{lem: all but one}
If $\pi = \Pat(s, \tent, n) = \Pat(t, \tent, n)$, then $s_1\cdots s_{n-1} = t_1\cdots t_{n-1}$ except for up to one place.
\end{lemma}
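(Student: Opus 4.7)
The plan is to reduce the statement to one about valid thresholds in $*$-$\sigma$-segmentations of $\hat\pi^*$, and then to bound how much any two such thresholds can differ.

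For the tent map we have $k=2$ and $\sigma = +-$, so $\prec_\sigma$ restricted to first-letter comparisons of binary infinite words coincides with the numerical order on $\{0,1\}$. Therefore, for any $s$ with $\pi = \Pat(s, \tent, n)$, the positions in $[1,n]$ where $s_i = 0$ are exactly those where $\pi_i \leq d$, with $d = |\{i \in [1,n] : s_i = 0\}|$; that is, $s_1\cdots s_n$ is completely determined by $\pi$ and $d$, and equals the $\pi$-monotone word for the $*$-$\sigma$-segmentation $0 \leq d \leq n$ of $\hat\pi^*$ (which is valid by the proof of Lemma \ref{lem:pihat in C}). Hence if $\pi = \Pat(s,\tent,n) = \Pat(t,\tent,n)$ with zero-counts $d$ and $d'$, it suffices to show that the $\pi$-monotone words for any two valid thresholds $e_1 \le e_2$ disagree in at most one position of $[1, n-1]$.

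Next I would show that any two valid thresholds satisfy $e_2 - e_1 \leq 2$, and that equality forces $\pi_n \in \{e_1+1, e_2\}$. Indeed, condition (a) applied to $e_2$ requires $\hat\pi^*_{e_1+1}, \ldots, \hat\pi^*_{e_2}$ to be strictly increasing, while the same condition applied to $e_1$ requires this very same subsequence to be strictly decreasing. Ignoring the $*$, the non-$*$ entries must be simultaneously strictly increasing and strictly decreasing, so at most one non-$*$ entry appears among $\hat\pi^*_{e_1+1}, \ldots, \hat\pi^*_{e_2}$. Since $\hat\pi^*$ contains exactly one $*$ and it sits at position $\pi_n$, the subsequence has length at most $2$, and length exactly $2$ forces the $*$ to appear in the range, i.e., $\pi_n \in \{e_1+1, e_2\}$.

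Finally I would translate the threshold bound into a count of disagreements. Assuming $d \leq d'$, the positions $i \in [1,n]$ where the two $\pi$-monotone words differ are exactly those with $d < \pi_i \leq d'$, a set of size $d' - d \leq 2$. If $d' - d \leq 1$ then at most one disagreement occurs anywhere, and in particular at most one lies in $[1, n-1]$. If $d' - d = 2$ then by the previous step $\pi_n$ is among the two disagreeing positions, so only one disagreement lies in $[1, n-1]$. In all cases, $s_1 \cdots s_{n-1}$ and $t_1 \cdots t_{n-1}$ agree except possibly at a single entry.

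The main obstacle is the simultaneous-monotonicity step bounding $e_2 - e_1$: a naive analysis that ignores the location of the $*$ would give difference up to $2$ across $[1,n]$ and thus fall short of the sharp bound on $[1,n-1]$; it is crucial that the unique $*$ lives precisely at position $\pi_n$, which is exactly what absorbs the ``second'' potential disagreement into the last slot.
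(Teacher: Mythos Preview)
Your argument is correct and follows essentially the same approach as the paper: both reduce the question to bounding how far apart two admissible thresholds $e_1$ can be in a $*$-$\sigma$-segmentation of $\hat\pi^*$, and both exploit that the $*$ sits at position $\pi_n$ to absorb the extra potential disagreement into the $n$-th slot. Your simultaneous-monotonicity argument on the overlap $\hat\pi^*_{e_1+1},\ldots,\hat\pi^*_{e_2}$ is a clean reformulation of the paper's case split on whether the $*$ is adjacent to the peak of $\hat\pi^*$.
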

\begin{proof}
Let $0 = e_0 \leq e_1 \leq e_2 = n$ be a $*$-$\sigma$-segmentation of $\pi$. Then $\pihat^*_1 < \cdots < \pihat^*_{e_1}$ and $\pihat^*_{e_1+1} > \cdots > \pihat^*_n$. If $*$ is not adjacent to the peak in $\pihat^*$ (position $j$ where $\pihat^*_{j-1}<\pihat^*_j>\pihat^*_{j+1}$), then we can choose $e_1$ in two ways, so that $\pihat^*_{e_1}<\pihat^*_{e_1+1}$ or that $\pihat^*_{e_1}>\pihat^*_{e_1+1}$. So, there is one possible place in $s_1\cdots s_{n-1}$ may change depending on which choice of $e_1$ we make. If $*$ is adjacent to the peak in $\pihat^*$, there may be more choices for $e_1$, but these will only affect whether $s_n$ is 0 or 1, not $s_1 \cdots s_{n-1}$.
\end{proof}

For the following theorem giving bounds on $|\Allow_n(\tent)|$, let $a_n := a(n,2)$. 
\begin{theorem}\label{thm:tent bounds}
The number of allowed patterns of the tent map satisfies these inequalities:
$$ \frac{1}{2}(a_n +2^{n-2}) \leq |\Allow_n(\tent)| \leq a_n - 2^{n-2}+1.$$
\end{theorem}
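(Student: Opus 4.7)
My plan is to combine Lemma \ref{lem: all but one} with the interval-counting argument underlying Theorem \ref{thm:bounds} to pin down both bounds. The starting point is that, specialized to $\sigma = +-$, the analysis in the proof of Theorem \ref{thm:bounds} in fact gives an equality: the total number of allowed intervals for length $n$ under $\tent$ equals $a_n + 2^{n-2}$. Each allowed interval lies inside a unique cylinder $[v] = \{s \in \W_2 : s_1 \cdots s_{n-1} = v\}$ for some $v \in \{0,1\}^{n-1}$, and a per-cylinder endpoint count recovers the total of $a_n + 2^{n-2}$ intervals.

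Next I would argue that no allowed pattern is realized by more than two allowed intervals. By Lemma \ref{lem: all but one}, the prefixes $v$ associated to intervals of a fixed pattern $\pi$ all agree outside one coordinate, hence form a subset of $\{0,1\}^{n-1}$ of size at most $2$. Moreover distinct intervals within a single cylinder realize distinct patterns, since crossing an endpoint inside a cylinder changes the realized pattern. Writing $P_i$ for the number of allowed patterns realized by exactly $i$ intervals ($i = 1, 2$), we therefore obtain
\[P_1 + P_2 = |\Allow_n(\tent)|, \qquad P_1 + 2P_2 = a_n + 2^{n-2},\]
from which the lower bound $|\Allow_n(\tent)| \geq \tfrac{1}{2}(a_n + 2^{n-2})$ follows immediately.

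For the upper bound, the identities rearrange to $|\Allow_n(\tent)| = (a_n + 2^{n-2}) - P_2$, so the desired bound $a_n - 2^{n-2} + 1$ is equivalent to $P_2 \geq 2^{n-1} - 1$. I would recast this in graph-theoretic terms: define a graph $G$ on vertex set $\{0,1\}^{n-1}$ with an edge between $v$ and $v'$ precisely when some allowed pattern of $\tent$ has both $v$ and $v'$ as prefixes. Lemma \ref{lem: all but one} makes $G$ a subgraph of the $(n-1)$-dimensional hypercube, and by construction $|E(G)| = P_2$. Since every connected graph on $2^{n-1}$ vertices has at least $2^{n-1} - 1$ edges, it suffices to show that $G$ is connected.

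The connectivity of $G$ is the main obstacle. My approach is constructive: given any prefix $v$ and any coordinate to flip, I would write down an explicit permutation $\pi$ and check using Theorem \ref{thm:desc allowed} that $\pi \in \Allow_n(\tent)$ has $v$ and its flipped neighbor as its two prefixes. The candidate $\pi$ is determined by demanding that $\hat\pi^*$ be cyclic and admit two consecutive $*$-$\sigma$-segmentations producing the chosen pair of prefixes. The bulk of the work will be to verify that the resulting $\pi$ satisfies condition $(\dagger)$ of Theorem \ref{thm:desc allowed}, ruling out the forbidden aligned $132$ or $312$ subpatterns at positions $(n-2b, n-b, n)$. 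I expect this verification to require a case analysis on the flipped coordinate and the structure of $v$, and it is the step I anticipate being most delicate.
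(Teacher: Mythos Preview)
Your lower-bound argument is essentially the paper's: at most two allowed intervals per pattern, hence $|\Allow_n(\tent)|\ge\tfrac12\I_n$. The paper also proves the auxiliary fact you use (two words with the same prefix $s_1\cdots s_{n-1}$ and the same pattern lie in the same allowed interval), so that part is fine.

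Your reduction of the upper bound to $P_2\ge 2^{n-1}-1$, and then to connectivity of the graph $G$ on $\{0,1\}^{n-1}$, is correct (note you only need $|E(G)|\le P_2$, not equality, for that implication). The gap is in your proposed proof of connectivity. You plan to show that \emph{every} hypercube edge lies in $G$, i.e.\ that for every prefix $v$ and every coordinate $j$ there is an allowed pattern with prefixes $v$ and $v\oplus e_j$. This is false already for $n=3$: the edge $\{00,10\}$ is absent from $G$. Indeed, any pattern with both prefixes would need $\pi_2<\pi_1$ (from the prefix $10$) and would have to allow $e_1\ge 2$ (from the prefix $00$); checking $\pi\in\{213,312,321\}$ one finds that $321\notin\Allow_3(\tent)$ while $213$ and $312$ only admit $e_1\le 1$, giving prefix sets $\{10,11\}$ and $\{10\}$ respectively. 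So the case analysis you anticipate as ``most delicate'' cannot succeed as stated; your constructed $\pi$ will sometimes fail to be an allowed pattern, not merely fail $(\dagger)$.

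The paper's route to the upper bound is shorter and sidesteps this entirely. It uses that the tent map $M_{+-}$ is continuous on $[0,1]$: under $\prec_\sigma$ the words $s_1\cdots s_{n-1}10^\infty$ and $s_1\cdots\bar s_{n-1}10^\infty$ (and likewise $s_1\cdots s_j0^\infty$ with $s_j=1$ and its flip at position $j-1$) represent the same point, so the two adjacent allowed intervals on either side carry the same pattern. Excluding the two domain endpoints this yields $2^{n-1}-1$ explicit pairings, hence $|\Allow_n(\tent)|\le\I_n-(2^{n-1}-1)$. In your graph language, the paper is exhibiting a specific Hamiltonian path in $G$ --- the cylinders in their natural left-to-right order on $[0,1]$, which is the reflected Gray code on $\{0,1\}^{n-1}$ --- rather than all hypercube edges. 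That is exactly enough for connectivity and for $P_2\ge 2^{n-1}-1$, and it requires no case analysis on $(\dagger)$.
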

\begin{proof}
Let $\I_n$ denote the number of allowed intervals for length $n$. In this case, $\I_n = a_n + 2^{n-2}$. To prove the lower bound, we want to show that $|\Allow_n(\tent)| \geq \frac{1}{2}\I_n$.

To prove this, we first show that there are at most two allowed intervals with the same pattern. By Lemma \ref{lem: all but one}, we know that if $\Pat(s, \tent, n) = \Pat(t, \tent, n)$ then we must have the $s_1\cdots s_{n-1} = t_1\cdots t_{n-1}$ except for possibly one position. In addition, if $s$ and $t$ are words such that $s_1\cdots s_{n-1} = t_1\cdots t_{n-1}$ and $\Pat(s, \tent, n) = \Pat(t, \tent, n)$, then we claim that $s$ and $t$ must lie in the same interval. If not, then there would be some way to write $s_1\cdots s_{n-1} = up$ with $p = s_{r+1}\cdots s_{n-1}$ primitive, so that $s\prec_\tent up^\infty \prec_\tent t$. If $\nn{s_1\cdots s_{n-1}}$ is even (that is, it contains an even number of 1s), this implies that $s_{[n,\infty)} \prec_\tent p^\infty \prec_\tent t_{[n,\infty)}$. Notice $s_{[r+1, \infty)} = ps_{[n,\infty)}$. If we assume that $\pi_n <\pi_{r+1}$, then $$ps_{[n,\infty)} \prec_\sigma s_{[n,\infty)}\prec_\sigma p^\infty$$ which would imply that $s_{[n,\infty)} = p^\infty$. In this case, the pattern of $s$ would not be defined. Therefore, we must have that $\pi_{t+1}<\pi_n$. Similarly, we can show that because $\pi_{r+1}>\pi_n$ by comparing $t_{[r+1,\infty)}$ and $t_{[n,\infty)}$. The same is true when $\nn{s_1\ldots s_{n-1}}$ is odd, with inequalities $\prec_\sigma$ reversed. This is a contradiction, and thus we must have that $s$ and $t$ lie in the same interval.
This, along with the Lemma \ref{lem: all but one} imply that there are at most two intervals for each pattern. The lower bound follows.

To prove the upper bound, we use the fact that $\tent$ is continuous over its domain. By definition of the ordering on words $\prec_\tent$, we notice that $s_1\cdots s_{n-1}10^\infty = s_1\cdots \bar{s}_{n-1}10^\infty$ and that $s_1\cdots s_j 0^\infty= s_1\cdots \bar{s}_{j-1} s_j 0^\infty$, where $s_j=1$. Both of these follow from the fact that according to the ordering $\prec_\tent$, there are no words between $010^\infty$ and $110^\infty$. (This is simliar to the fact that according to lexicographical ordering on binary words, $01^\infty = 10^\infty$.) Notice that the pattern of $s_1\cdots s_{n-1}10^\infty = s_1\cdots \bar{s}_{n-1}10^\infty$ is defined. The pattern for these is the same as for their adjacent intervals, and so the two intervals that share this endpoint have the same pattern. This happens for all patterns that end in $\pi_n = n$. 
In the second case, $s_1\cdots s_j 0^\infty= s_1\cdots \bar{s}_{j-1} s_j 0^\infty$, where $s_j=1$, the patterns in the allowed intervals adjacent to this endpoint are the same and end in $n123\cdots m$ for some $m<n$. 

We use the fact that we know these patterns were paired up to get a better upper bound. At the endpoints of the domain, $s = 10^\infty$ and $s=0^\infty$, there is no adjacent interval, but at every other endpoint of the type $s_1\cdots s_{n-1} 0^\infty$ and $s_1\cdots s_{n-1}10^\infty$, there is this pairing. Therefore the number of pairings we have identified is $2^{n-1} -1$ since there are $2^{n-1}$ ways to write $s_1\cdots s_{n-1}$. So improve the upper bound to  $$|\Allow_n(\tent)|\leq \I_n - (2^{n-1}-1) = a_n -2^{n-2} +1.$$
\end{proof}

\begin{corollary}
For $n\geq 3$, $|\Allow_n(\tent)|<|\Allow_n(\Sigma_{++})|$. 
\end{corollary}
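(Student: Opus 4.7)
The plan is to combine the upper bound on $|\Allow_n(\tent)|$ from Theorem~\ref{thm:tent bounds} with the tightness of the Theorem~\ref{thm:bounds} bound for the $2$-shift. Theorem~\ref{thm:tent bounds} gives
\[
|\Allow_n(\tent)| \leq a_n - 2^{n-2} + 1,
\]
while Theorem~\ref{thm:bounds} with $k = 2$ and $\sigma_0 = \sigma_{k-1} = +$ gives $|\Allow_n(\Sigma_{++})| \leq a_n$, since the correction term $(k-2)k^{n-2}$ vanishes. If I can argue that this second bound is actually an equality, then for $n \geq 3$ the inequality $2^{n-2} \geq 2$ yields
\[
|\Allow_n(\tent)| \;\leq\; a_n - 2^{n-2} + 1 \;\leq\; a_n - 1 \;<\; a_n \;=\; |\Allow_n(\Sigma_{++})|,
\]
which is the strict inequality claimed.

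To establish $|\Allow_n(\Sigma_{++})| = a_n$, I would argue that each of the $a_n$ allowed intervals for length $n$ yields a \emph{distinct} pattern under $\Sigma_{++}$. The proof of Theorem~\ref{thm:bounds} already shows $|\Allow_n(\Sigma_{++})|$ is at most this count of intervals, and for the tent map the proof of Theorem~\ref{thm:tent bounds} obtains a strict improvement from $\I_n$ to $a_n - 2^{n-2} + 1$ by pairing adjacent intervals that share a pattern, an argument that relies essentially on the continuity of $\tent$ at its peak $x = 1/2$. The sawtooth realization of $\Sigma_{++}$ is $M_{++}(x) = 2x \pmod{1}$, which is instead \emph{discontinuous} at $x = 1/2$, so no such pairing can collapse two intervals into the same pattern. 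More precisely, I would prove a strengthened analog of Lemma~\ref{lem: all but one} for $\Sigma_{++}$: if $\Pat(s, \Sigma_{++}, n) = \Pat(t, \Sigma_{++}, n)$ then $s_1 \cdots s_{n-1} = t_1 \cdots t_{n-1}$ on \emph{every} coordinate. Since for $\sigma = ++$ both parts of a $*$-$\sigma$-segmentation of $\hat\pi^*$ must be increasing, and since conditions (b) and (c) pin down the boundary cases, the split point $e_1$ should be uniquely determined by any realizing word, which forces $s$ and $t$ to lie in the same allowed interval.

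The main obstacle is proving this strengthened analog of Lemma~\ref{lem: all but one}. In the tent case the ``one wild position'' arises precisely because an increasing prefix meeting a decreasing suffix admits two choices of split point $e_1$ on either side of the peak of $\hat\pi^*$; for $\sigma = ++$ the absence of such a peak should eliminate this flexibility, and a case analysis paralleling the proof of Lemma~\ref{lem: all but one} should confirm that $e_1$ is uniquely forced by $s$. Once this is in hand (or, alternatively, extracted from the explicit count for the $k$-shift in \cite{Elishifts}), the short arithmetic comparison above completes the proof.
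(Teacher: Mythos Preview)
Your approach is essentially the paper's: combine the upper bound $|\Allow_n(\tent)| \le a_n - 2^{n-2} + 1$ from Theorem~\ref{thm:tent bounds} with the identity $|\Allow_n(\Sigma_{++})| = a_n$, and note $2^{n-2} > 1$ for $n \ge 3$. The paper simply invokes \cite{Elishifts} for the identity $a_n = |\Allow_n(\Sigma_{++})|$ (your parenthetical alternative), rather than re-deriving it; if you do pursue the re-derivation, note that uniqueness of $e_1$ only gives you $s_1\cdots s_{n-1} = t_1\cdots t_{n-1}$, and you would still need the separate step that two words with the same length-$(n-1)$ prefix and the same pattern lie in the same allowed interval.
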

\begin{proof}
 By \cite{Elishifts}, $a_n = |\Allow_n(\Sigma_{++})|$. Therefore, Theorem \ref{thm:tent bounds} implies that for $n\geq3$, the number of allowed patterns for the tent map is strictly smaller than the number of allowed patterns for the binary shift. When $n=1$ or 2, we have $|\Allow_n(\tent)|=|\Allow_n(\Sigma_{++})|$.
 \end{proof}
 
Since the allowed patterns of $\Sigma_{+-}$ are complements of the allowed patterns of $\Sigma_{-+}$, we also have the following corollary.
 
 \begin{corollary}
The number of allowed patterns of $\Sigma_{-+}$ satisfies these inequalities:
$$ \frac{1}{2}(a_n +2^{n-2}) \leq |\Allow_n(\Sigma_{-+})| \leq a_n - 2^{n-2}+1.$$
\end{corollary}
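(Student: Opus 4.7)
The plan is to reduce the claim for $\Sigma_{-+}$ to Theorem~\ref{thm:tent bounds} via a bijection between the allowed patterns of $\Sigma_{+-}$ and those of $\Sigma_{-+}$ given by complementation. Since $k=2$ in both cases, I would work with the bitwise complement map $s=s_1s_2\cdots \mapsto \bar s=\bar s_1\bar s_2\cdots$ on $\W_2$, where $\bar s_i = 1-s_i$. This map is an involution, and it obviously commutes with the left shift, so $\Sigma_{-+}(\bar s) = \overline{\Sigma_{+-}(s)}$; in particular the orbit of $\bar s$ under $\Sigma_{-+}$ is the bitwise complement of the orbit of $s$ under $\Sigma_{+-}$.

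Next, I would show that complementation is an order-reversing bijection from $(\W_2,\prec_{+-})$ to $(\W_2,\prec_{-+})$. Using the equivalent characterization of $\prec_\sigma$ given in Section~\ref{Sec:SS}: for $s\ne t$, let $j$ be the smallest index with $s_j\ne t_j$ and let $c_\sigma = |\{i<j : s_i\in T^-_\sigma\}|$. Since $T^-_{+-}=\{1\}$ and $T^-_{-+}=\{0\}$, the count $c_{+-}$ for the pair $(s,t)$ equals the count $c_{-+}$ for the pair $(\bar s,\bar t)$, as complementing flips $1$s to $0$s in positions $1,\dots,j-1$. A parity case check then shows that $s\prec_{+-}t$ holds iff $\bar s \succ_{-+} \bar t$: when $c$ is even, $s_j<t_j$ is equivalent to $\bar s_j>\bar t_j$; and when $c$ is odd, $s_j>t_j$ is equivalent to $\bar s_j<\bar t_j$.

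Combining the two observations, if $\pi = \Pat(s,\Sigma_{+-},n)$, then the ranks of the orbit elements are reversed when one replaces $s$ with $\bar s$ and $\Sigma_{+-}$ with $\Sigma_{-+}$, so $\Pat(\bar s,\Sigma_{-+},n) = \pi^c$, where $\pi^c_i := n+1-\pi_i$. Since complementation on $\S_n$ is a bijection, this gives $\Allow_n(\Sigma_{-+}) = \{\pi^c : \pi\in\Allow_n(\tent)\}$, hence $|\Allow_n(\Sigma_{-+})| = |\Allow_n(\tent)|$. Applying Theorem~\ref{thm:tent bounds} then yields the desired inequalities. The only non-routine step is the order-reversal verification above, but it reduces to a small parity computation with no real obstacle.
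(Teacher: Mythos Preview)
Your proposal is correct and follows essentially the same approach as the paper: the paper simply asserts that the allowed patterns of $\Sigma_{+-}$ are complements of the allowed patterns of $\Sigma_{-+}$ and immediately deduces the corollary from Theorem~\ref{thm:tent bounds}. You have supplied the details the paper omits, namely the verification that bitwise complementation on $\W_2$ is an order-reversing conjugacy between $(\W_2,\prec_{+-})$ and $(\W_2,\prec_{-+})$, which is exactly what is needed to justify that sentence.
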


There is a lot of room to improve these bounds, of course. The true value is much closer to $\frac{1}{2} \I_n$ than to $\I_n$ since most allowed intervals are paired up. Patterns $\pi$ so that there is a unique allowed interval with that pattern are exactly those patterns $\pi$ where $s_1\cdots s_{n-1}$ is determined uniquely. If $c_n$ denotes the number of these patterns, then $$|\Allow_n(\tent)| = \frac{1}{2}(\I_n-c_n) + c_n = \frac{1}{2}(\I_n + c_n)$$ so if we can enumerate these patterns, then we can also enumerate the allowed patterns of the tent map.
We characterize these patterns in the following theorem. However, because of their characterization, they are difficult to count.

\begin{theorem}
Allowed patterns $\pi$ of the tent map so that $s_1\cdots s_{n-1}$ is uniquely determined are those allowed patterns which admit some $*$-$\sigma$-segmentation $0=e_0\leq e_1\leq e_2 = n$ and some $b$ so that 
\begin{enumerate}
\item $\st(\pi_{n-2b}\pi_{n-b}\pi_n) = 312$ or $132$, 
\item $e_t< \pi_{n-2i}\leq e_{t+1}$ if and only if $e_t<\pi_{n-i}\leq e_{t+1}$ for all $1\leq i \leq b$ and for $t = 0,1$.
\end{enumerate}
\end{theorem}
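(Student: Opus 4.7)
The plan is to use Lemma~\ref{lem: all but one} as the starting point. It asserts that for any allowed pattern $\pi$ of $\tent$, distinct words $s,t$ realizing $\pi$ as a pattern can have prefixes $s_1\cdots s_{n-1}$ and $t_1\cdots t_{n-1}$ differing in at most one position, with the distinct prefixes coming from the (at most two) choices of $e_1$ in the $*$-$\sigma$-segmentation of $\hat\pi^*$. Combined with Theorem~\ref{thm:desc allowed}, this means that $s_1\cdots s_{n-1}$ is uniquely determined precisely when only one $*$-$\sigma$-segmentation of $\hat\pi^*$ satisfies both conditions (a)--(e) and condition~$(\dagger)$; equivalently, when the ``alternative'' segmentation (if it exists) fails $(\dagger)$.

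For the forward direction, I would start with an allowed $\pi$ whose $s_1\cdots s_{n-1}$ is uniquely determined. If both possible $*$-$\sigma$-segmentations of $\hat\pi^*$ exist and give different prefixes, exactly one must fail $(\dagger)$. Using the failing segmentation as the $e_1$ in the theorem's statement and the witness $b$ from the failure of $(\dagger)$, condition~1 follows directly because it is the same as the first clause of $(\dagger)$, while condition~2 is read off from the periodicity that the $\pi$-monotone word attached to this failing segmentation is forced to satisfy. For the reverse direction, given some $*$-$\sigma$-segmentation $e_1^0$ and some $b$ meeting conditions 1 and 2, the associated $\pi$-monotone word $s^0$ will have exactly the periodic structure on its tail $s^0_{n-2b}\cdots s^0_{n-1}$ that rules out any infinite continuation realizing $\pi$ as a pattern, by a mirror of the argument in Lemma~\ref{lem:pi dagger}. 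Hence $e_1^0$ cannot be the valid segmentation, and so the other candidate (which must exist since $\pi$ is allowed) gives the unique $s_1\cdots s_{n-1}$.

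The technical heart of the proof, and the main obstacle, is to bridge condition~2 of the theorem (phrased in terms of $\pi_{n-2i}$) with the $q^2$-periodicity required by $(\dagger)$ (phrased in terms of $\pi_{n-b-i}$) for the alternative monotone word. I would handle this via a case analysis on the location of the unique position $i_0$ at which the two candidate $\pi$-monotone words differ: depending on whether $i_0$ falls in the ``first half'' $[n-2b, n-b-1]$ or the ``second half'' $[n-b, n-1]$ of the relevant tail block, flipping the bit at $i_0$ transforms a $q^2$-block into the structure described by condition~2. Edge cases where $*$ is adjacent to the peak of $\hat\pi^*$, so that more than two segmentations coexist, reduce to the same reasoning once one focuses on prefixes of length $n-1$ rather than full $\pi$-monotone words; I expect these cases to be handled uniformly by the same case analysis.
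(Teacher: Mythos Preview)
Your overall strategy matches the paper's: both arguments combine Lemma~\ref{lem: all but one} (at most two candidate prefixes $s_1\cdots s_{n-1}$, coming from the two choices of $e_1$) with Theorem~\ref{thm:desc allowed} to conclude that the prefix is unique exactly when one of the two $*$-$\sigma$-segmentations fails $(\dagger)$, and then identify that failure with conditions~1--2 of the theorem.

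Where you diverge from the paper is in your ``technical heart''. The paper does \emph{not} attempt any bridging between condition~2 (written with $\pi_{n-2i}$) and the second clause of $(\dagger)$ (written with $\pi_{n-b-i}$); it simply treats conditions~1--2 as the negation of $(\dagger)$ for the named segmentation. The indexing discrepancy you noticed is almost certainly a typo in the statement rather than a genuine mathematical gap, and the paper's proof reads accordingly. So the case analysis you propose on the position $i_0$ of the flipped bit is unnecessary.

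One point the paper makes explicit that your sketch underplays is in the reverse direction. After showing that the segmentation $e_1^0$ satisfying conditions~1--2 fails $(\dagger)$, you conclude ``the other candidate \ldots\ gives the unique $s_1\cdots s_{n-1}$''. But when $*$ is adjacent to the peak of $\hat\pi^*$, several segmentations share the \emph{same} prefix $s_1\cdots s_{n-1}$ and differ only in $s_n$; you must rule out the possibility that one of these is the valid one. The paper's argument here is: since $\pi_{n-2b}$ and $\pi_{n-b}$ lie in the same segment under $e_1^0$ (this is condition~2 at $i=b$), any segmentation that changes only $s_n$ leaves them in the same segment and therefore still fails $(\dagger)$ with the same witness $b$. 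Hence the valid segmentation must alter some $s_j$ with $j\le n-1$, which forces the prefix to be the other one. Your edge-case paragraph gestures at this but does not isolate the key observation.
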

\begin{proof}
For an allowed pattern $\pi\in \Allow_n(\tent)$, the number possible of $*$-$\sigma$-segmentations is restricted since by Lemma \ref{lem: all but one} there are only two possible ways to write $s_1\cdots s_{n-1}$. Notice that by the proof of Theorem \ref{thm:desc allowed}, the $*$-$\sigma$-segmentation described produces a word whose pattern is not defined. Since $\pi$ is allowed, there is some other $*$-$\sigma$-segmentation where this does not happen. However, since $\st(\pi_{n-2b}\pi_{n-b}\pi_n) = 312$ or $132$ and $e_t<\pi_{n-2b}, \pi_{n-b}\leq e_{t+1}$ for some $t$ in the $*$-$\sigma$-segmentation described above, the new $*$-$\sigma$-segmentation must change some letter in $s_1\cdots s_{n-1}$ (it is not enough for it to just change $s_n$). Therefore, $s_1\cdots s_{n-1}$ is uniquely determined by the new $*$-$\sigma$-segmentation. Conversely, if there is only one choice of $s_1\cdots s_{n-1}$ is the only choice for $\pi$, there must be a problem with the other choice. By Theorem \ref{thm:desc allowed}, we see that the problem that can arise in the case of the tent map is the one described in this theorem.
\end{proof}

\subsection{The $k$-shift}

Using the idea of allowed intervals, we are also able to recover the result from \cite{Elishifts} enumerating the number of allowed patterns of the $k$-shift. Certainly, $|\Allow_n(\Sigma_2)| = \I_n = a(n,2)$. Assume $k\geq 3$. Here, let $\I_{n,k}$ denote the number of allowed intervals for the $k$-shift:
$$ 
\I_{n,k} = \sum_{t=1}^{n-1} \psi_k(t) k^{n-t-1} + (k-2)k^{n-2}
$$
where $\psi_k(t)$ is the number of primitive words on $k$ letters of length $t$. Additionally, let us denote $$b(n,k) = |\Allow_n(\Sigma_k)\setminus\Allow_n(\Sigma_{k-1})|.$$
\begin{theorem}
For $n\geq3$ and $k\geq 3$, $$b(n,k) = \I_{n,k} - \sum_{i = 2}^{k-1} {{n+k-i}\choose{k-1}} b(n,i).$$
\end{theorem}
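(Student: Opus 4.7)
The plan is to count the $\I_{n,k}$ allowed intervals of $\Sigma_k$ in a second way, stratifying by the smallest shift index that realizes each pattern. Since $\Allow_n(\Sigma_i) \subseteq \Allow_n(\Sigma_{i+1})$ (an $i$-letter word may be viewed in any larger alphabet), we obtain the disjoint decomposition $\Allow_n(\Sigma_k) = \bigsqcup_{i=2}^{k}\bigl(\Allow_n(\Sigma_i)\setminus\Allow_n(\Sigma_{i-1})\bigr)$, where the $i$th block has $b(n,i)$ elements. Each allowed interval corresponds to a unique allowed pattern, so $\I_{n,k}$ equals a weighted sum of the $b(n,i)$'s with weights given by the number of intervals realizing each pattern of ``type $i$''.

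The central claim to establish is that a pattern $\pi \in \Allow_n(\Sigma_i)\setminus\Allow_n(\Sigma_{i-1})$ with $2 \leq i < k$ is realized by exactly $\binom{n+k-i}{k-1}$ allowed intervals of $\Sigma_k$, while a pattern new to $\Sigma_k$ (the case $i=k$) is realized by exactly one. The approach is to invoke Theorem~\ref{thm:desc allowed}: an allowed interval of $\Sigma_k$ realizing $\pi$ is encoded by a valid $*$-$(+^k)$-segmentation $0=e_0\leq e_1\leq\cdots\leq e_k=n$ of $\hat\pi^*$ together with a choice of terminal extension ($w_\sigma$ or $W_\sigma$). A pattern of type $i$ forces $i-1$ of the split positions (one per descent of $\hat\pi^*$), so the remaining $k-i$ splits may be placed freely among $\{0,1,\ldots,n\}$ with repetition. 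A stars-and-bars count, refined by the edge conditions (b)--(e) and the non-degeneracy condition $(\dagger)$, will produce $\binom{n+k-i}{k-1}$; when $i=k$ no freedom remains and the interval is unique.

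Summing over $i$ gives $\I_{n,k}=b(n,k)+\sum_{i=2}^{k-1}\binom{n+k-i}{k-1}b(n,i)$, and rearranging yields the stated recursion. The hard part will be making the segmentation-to-interval correspondence rigorous. Different $*$-$(+^k)$-segmentations of the same $\pi$ can produce $\pi$-monotone prefixes that end up in a common allowed interval (e.g.\ when one prefix serves as the periodic endpoint of another's interval), and conversely an allowed interval need not contain any $\pi$-monotone prefix of length $n$ at its boundary. Matching the two types of endpoints that define $\I_{n,k}$ in Theorem~\ref{thm:bounds}, namely the $a(n,k)$ eventually periodic endpoints and the additional $(k-2)k^{n-2}$ endpoints coming from $w_\sigma$- and $W_\sigma$-terminated words, against the padding freedom available when extending a $+^i$-segmentation to a $+^k$-segmentation, is the delicate combinatorial step that must be carried out before the stars-and-bars count genuinely yields $\binom{n+k-i}{k-1}$ distinct intervals per type-$i$ pattern.
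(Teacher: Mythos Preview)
Your plan is the paper's: count the allowed intervals of $\Sigma_k$ by stratifying patterns according to the least $i$ with $\pi\in\Allow_n(\Sigma_i)$, identify intervals with $*$-$(+^k)$-segmentations of $\hat\pi^*$, and reduce to a stars-and-bars count of the free dividers. The paper's argument is shorter than you anticipate because of two simplifications you have not yet spotted. First, for $\sigma=+^k$ the condition $(\dagger)$ is vacuous: if the second bullet holds (so that $s_{n-2b}\cdots s_{n-b-1}=s_{n-b}\cdots s_{n-1}$) and every segment of $\hat\pi^*$ is increasing, then the comparisons $\pi_{n-2b}\lessgtr\pi_{n-b}$ and $\pi_{n-b}\lessgtr\pi_n$ propagate with the same sign, forcing $\st(\pi_{n-2b}\pi_{n-b}\pi_n)\in\{123,321\}$, so the first bullet can never fire. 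Second, conditions (d) and (e) require $\sigma_0=\sigma_{k-1}=-$ and are irrelevant here; only (b) and (c) survive, and the paper absorbs them into the descent count by noting that a type-$i$ pattern either has $i-1$ honest descents in $\hat\pi^*$ or has one fewer descent together with $\hat\pi^*$ beginning $*1$ or ending $n*$.

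One misconception to drop: the ``choice of terminal extension $w_\sigma$ or $W_\sigma$'' is not part of the encoding of an interval. The segmentation determines the $\pi$-monotone prefix $s_1\cdots s_{n-1}$, and the paper argues that two words with the same pattern and the same length-$(n-1)$ prefix lie in the same allowed interval; the tails $w_\sigma,W_\sigma$ in Lemmas~\ref{lem:pin=1}--\ref{lem:pat is pi} merely exhibit a witness point inside that interval. So the bijection you need is between \emph{prefixes} $s_1\cdots s_{n-1}$ arising from valid segmentations and allowed intervals, not between (segmentation, tail) pairs and intervals. With that in hand your stars-and-bars, placing $k-i$ free dividers among $n+1$ slots, gives $\binom{n+k-i}{k-i}$, which is exactly the binomial written in the paper's proof; be aware of the discrepancy with the exponent $k-1$ printed in the theorem statement and check against the generating-function corollary that follows.
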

\begin{proof}
We prove this by showing that the following equivalent statement is true: $$ \I_{n,k} = \sum_{i = 2}^{k} {{n+k-i}\choose{k-1}} b(n,i).$$
First, suppose $s=s_1s_2\cdots$ and $t = t_1t_2\cdots$ so that $\Pat(s,\Sigma_k,n)= \Pat(t,\Sigma_k,n) = \pi$. Similarly to before, if $s_1\cdots s_{n-1}= t_1\cdots t_{n-1}$, then these two words must lie within the same allowed pattern. Also we don't have to worry about the condition $(\dagger)$ from Theorem \ref{thm:desc allowed} since $\sigma = +^k$. Therefore, any $*$-$\sigma$-segmentation of $\pihat$ will give a word whose pattern is $\pi$. 

We defined $b(n,i)$ to be the number of patterns realized by $\Sigma_i$ but not by $\Sigma_{i-1}$. These are patterns that have a $*$-$+^i$-segmentation but not a $*$-$+^{i-1}$-segmentation. So, either $\pihat$ has $i-1$ descents or it starts with $*1$ or ends with $n*$. In each case, there are ${{n+k-i}\choose{k-i}}$ possible $*$-$+^k$-segmentations of $\pihat$. Each of these is associated with a different allowed interval since for each of these we get a different $s_1\cdots s_{n-1}$. %\proof?
Since we consider all allowed patterns by considering each pattern counted by $b(n,i)$ for all $2\leq i \leq k$, we have counted every allowed interval in this way. Therefore, the left and right hand sides both count all allowed intervals.
% need to revise this proof!
\end{proof}
This theorem gives a recurrence for the allowed patterns of the $k$-shift that closely resembles the recurrence for the periodic patterns of the $k$-shift \cite{AE} and cyclic permutations with $k-1$ descents \cite{GesReut} and thus has the following generating function. 
\begin{corollary}
The numbers $b(n,k)$ satisfy the following generating function:
$$\frac{\sum_{k=2}^n b(n,k) x^k}{(1-x)^n} = \sum_{k\geq 1} \I_{n,k} x^k.$$
\end{corollary}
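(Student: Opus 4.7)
The plan is to recognize the recurrence of the preceding theorem as a Cauchy convolution and read off the claimed rational generating function. Rewriting the recurrence in the form
$$\I_{n,k} = \sum_{i=2}^{k} \binom{n+k-i-1}{k-i}\, b(n,i),$$
so that the $i=k$ term contributes $b(n,k)$, the coefficient $\binom{n+k-i-1}{k-i}$ is precisely $[x^{k-i}](1-x)^{-n}$ via the classical identity
$$\frac{1}{(1-x)^n}=\sum_{j\ge 0}\binom{n+j-1}{j}\,x^j.$$

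I would then multiply both sides of the recurrence by $x^k$, sum over $k\ge 1$, and swap the order of summation on the right. This yields a Cauchy product,
$$\sum_{k\ge 1}\I_{n,k}\,x^k=\left(\sum_{i\ge 2} b(n,i)\,x^i\right)\left(\sum_{j\ge 0}\binom{n+j-1}{j}\,x^j\right)=\frac{\sum_{i\ge 2} b(n,i)\,x^i}{(1-x)^n}.$$
Truncation of the numerator at $i=n$ is justified because every permutation of $[n]$ is realized as an allowed pattern of the $n$-shift (its cyclic form $\pihat$ admits a trivial $*$-$+^n$-segmentation with singleton blocks), so $b(n,i)=0$ for $i>n$. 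This is exactly the identity claimed in the corollary.

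The combinatorial content has already been extracted in the preceding theorem, so the main step here is just to match the binomial coefficient appearing in the recurrence with $[x^{k-i}](1-x)^{-n}$; the rest is a one-line generating-function manipulation. I anticipate no serious obstacle beyond this bookkeeping and a quick boundary sanity check at small $k$ (for instance, $\I_{n,1}=0$ and $\I_{n,2}=b(n,2)$, which agree with the right-hand side at $x^1$ and $x^2$).
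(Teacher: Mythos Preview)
Your Cauchy-product derivation is the standard way to pass from a binomial-convolution recurrence to a rational generating function, and it fills in what the paper omits: the paper does not actually prove this corollary but simply asserts it by analogy with \cite{AE} and \cite{GesReut}.

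One point deserves to be made explicit, however. The coefficient $\binom{n+k-i-1}{k-i}$ in your rewritten recurrence does not match the one printed in the preceding theorem, which reads $\binom{n+k-i}{k-1}$ in the statement and $\binom{n+k-i}{k-i}$ in its proof. Your version is the one consistent with the corollary---it equals $[x^{k-i}](1-x)^{-n}$, it gives $1$ at $i=k$ as you note, and a direct check at $n=3$ (where $b(3,2)=6$, $b(3,i)=0$ for $i\ge 3$, while $\I_{3,2}=6$, $\I_{3,3}=18$, $\I_{3,4}=36$) confirms it. In fact the paper's assertion that each $*$-$+^k$-segmentation yields a distinct $s_1\cdots s_{n-1}$ is not quite right: segmentations that differ only in how $\pi_n$ is placed give the same initial word, which accounts for the off-by-one in the upper index. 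So what you call ``rewriting'' is really a correction of a typo, and you should say so rather than present it as a mere rearrangement. With that caveat, the argument---including the truncation at $i=n$ via $\Allow_n(\Sigma_n)=\S_n$---is sound.
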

It follows that $b(n,k)$ has the same formula as presented in \cite{Elishifts}.

\subsection{Binary reverse shift}

In this section, we enumerate the allowed patterns for $\rs_2 = \Sigma_{--}$ exactly. Recall $a_n =a(n,2)$ is the number of allowed patterns for the binary shift $\Sigma_2$. 

\begin{theorem}
For $n\geq 3$, $|\Allow_n(\rs_2)| = a_n +2^{n-2}-2.$
\end{theorem}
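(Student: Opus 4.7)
The plan begins from the upper bound $|\Allow_n(\rs_2)| \leq \I_n = a_n + 2^{n-2}$ given by Theorem \ref{thm:bounds}. Since the target value differs from this bound by exactly $2$, it suffices to exhibit exactly two pairs of distinct allowed intervals that realize the same pattern, and then to show that no further coincidences occur.

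The first step is to establish an analog of Lemma \ref{lem: all but one} for the reverse binary shift: if $s, t$ lie in distinct allowed intervals with $\Pat(s,\rs_2,n) = \Pat(t,\rs_2,n) = \pi$, then their prefixes $s_1\cdots s_{n-1}$ and $t_1\cdots t_{n-1}$ are the $\pi$-monotone words associated with two different valid $*$-$\sigma$-segmentations of $\hat\pi^*$ from Theorem \ref{thm:desc allowed}. Since for $\sigma = --$ a segmentation is parameterized by a single value $e_1$, the number of intervals realizing $\pi$ equals the number of distinct prefixes $(s_1,\ldots,s_{n-1})$ arising from valid choices of $e_1$ (those satisfying conditions (a)--(e) of the $*$-$\sigma$-segmentation together with $(\dagger)$). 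The extension from a valid prefix to a full word with pattern $\pi$ is then provided by the constructions in Lemmas \ref{lem:pin=1}--\ref{lem:pat is pi}, so two distinct prefixes give two distinct intervals.

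The second step is to classify the patterns by how many distinct prefixes their valid segmentations produce. For most $\pi$, the valid $e_1$-values all yield the same prefix, giving a single allowed interval. The exceptional patterns admit two distinct prefixes of the form $(s_1, s_2) = (0,0)$ versus $(s_1,s_2) = (1,1)$, each giving one interval; for $n = 3$, these exceptional patterns are $\pi = 213$ and $\pi = 231$, triggered respectively by conditions (d) and (e) applied to $\hat\pi^* = 31\ast$ and $\hat\pi^* = \ast 31$, each of which admits three valid values $e_1 \in \{0,1,n\}$ or $\{0,n-1,n\}$ that collapse to exactly the two prefixes above. These two exceptional patterns contribute precisely the $-2$ correction to the upper bound.

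The main obstacle is showing that exactly two such exceptional patterns exist for every $n \geq 3$. The naive generalization of the $n=3$ exceptional patterns, in which $\hat\pi^*$ has the form $\ast, n, n-1, \ldots, 1$ (or the symmetric counterpart ending in $\ast$), fails to yield an $n$-cycle once $n \geq 4$, so for larger $n$ the exceptional $\pi$ must take a subtler form respecting the cyclic structure required by $\C_n^\ast$. A careful case analysis using the conditions of Theorem \ref{thm:desc allowed}, particularly the interaction between the position of the $\ast$ in $\hat\pi^*$, the descent structure, and the forbidden pattern condition $(\dagger)$, will be required both to identify the two exceptional $\pi$ for each $n$ and to rule out further coincidences by showing that any other configuration with multiple valid $e_1$-values either collapses to a single prefix or fails $(\dagger)$.
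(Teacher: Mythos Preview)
Your plan matches the paper's approach: start from the interval count $\I_n = a_n + 2^{n-2}$, show that two words with the same pattern and the same prefix $s_1\cdots s_{n-1}$ lie in the same allowed interval, and then argue that exactly two allowed patterns admit two distinct prefixes while all others determine the prefix uniquely. The paper carries out the uniqueness part via a four-case analysis on the shape of $\hat\pi^*$: (1) an explicit ascent $\hat\pi^*_i < \hat\pi^*_{i+1}$; (2) an ascent hidden across the $*$, i.e.\ $\hat\pi^*_i < \hat\pi^*_{i+2}$ with $\hat\pi^*_{i+1} = *$; (3) $\hat\pi^*$ strictly decreasing with the $*$ in an interior position; (4) $\hat\pi^*$ strictly decreasing with the $*$ at an end.

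Two corrections to your outline. First, the exceptional patterns for general $n$ are not subtler than you fear: they are exactly Case~(4). Cyclicity does not eliminate this case; it merely pins down the value replaced by $*$ to be roughly $n/2$ (precisely $(n+1)/2$, $n/2$, or $n/2+1$ depending on parity and on which end the $*$ sits), after which conditions~(d)/(e) force $e_1 \in \{0,1\}\cup\{n-1,n\}$, yielding the two prefixes $0^{n-1}$ and $1^{n-1}$. Your ``naive generalization'' with $*$ standing for $1$ or $n$ was the wrong guess, but the correct one is no harder. Second, the place where $(\dagger)$ does the real work is Case~(3), not the identification of the exceptional patterns: when $\hat\pi^*$ is decreasing with the $*$ in the interior, several values of $e_1$ satisfy condition~(a), but one shows that $\pi$ then necessarily ends in a block of the form $(i-1)(i+1)i$ or $(i+1)(i-1)i$, a $132$/$312$ of precisely the shape targeted by $(\dagger)$, so $(\dagger)$ eliminates all but one segmentation and the prefix is still unique. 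Without isolating this case you cannot conclude uniqueness outside Case~(4).
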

\begin{proof}
For this proof, let $\prec \, := \prec_{\rs_2}$. The idea of this proof is to show that there are only two patterns where $s_1\cdots s_{n-1}$ is not determined uniquely and for each, there are only two possibilities for $s_1\cdots s_{n-1}$. Additionally, if two words have the same pattern and both start with $s_1\cdots s_{n-1}$, they must lie in the same interval. This would imply that the number of patterns is exactly $\I_n - 2$ where $\I_n$ is the number of allowed intervals. Using Theorem \ref{thm:bounds} to find $\I_n$, this would prove the formula above.

If $s$ and $t$ satisfy $s_1\cdots s_{n-1} = t_1\cdots t_{n-1}$ and $\Pat(s, \rs_2, n) = \Pat(t, \rs_2, n)$, then $s$ and $t$ must lie in the same interval. If not, then there would be some way to write $s_1\cdots s_{n-1} = up$ with $p = s_{t+1}\cdots s_{n-1}$, so that $s\prec up^\infty \prec t$ (or vice versa). If $n$ is odd, this implies that $s_{[n,\infty)} \prec p^\infty \prec t_{[n,\infty)}$. From this, we can see this implies that $\pi_n< \pi_{t+1}<\pi_n$. Similarly if $n$ is even (the inequalities are switched). 

Consider four cases, depending on $\pihat^*$. 

 {\it Case 1.}  Suppose $\pihat^*$ has a clear ascent. That is, there is some $i$ so that $\pihat^*_i<\pihat^*_{i+1}$. Then $s_1\cdots s_{n-1}$, a $\pi$-monotone word, is clearly forced. 

{\it Case 2.} Suppose there is some $i$ so that $\pihat^*_i<\pihat^*_{i+2}$ and $\pi_{i+1} = *$. Then $s_1\cdots s_{n-1}$ are still forces, but $s_n$ may be 0 or 1. 

{\it Case 3.} Suppose that there is some $i \in [2, n-1]$ so that $\pihat^*_i = *$ and $\pihat^*_1>\pihat^*_{2}>\cdots >\pihat^*_{i-1}>\pihat^*_{i+1}>\cdots > \pihat^*_n.$ Here there is no clear ascent, but we will show that $s_1\cdots s_{n-1}$ will still be determined. First notice that $\pihat_i = n$ or 1, that is, either the number $n$ or 1 does not appear in $\pihat^*$. If not, then we would have $\pihat^*_1 = n$ and $\pihat^*_n = 1$, but this would imply that $n$ comes after 1 in $\pi$ and that $1$ comes after $n$. This is a contradiction. Therefore, the numbers $\pihat^*_1>\pihat^*_{2}>\cdots >\pihat^*_{i-1}>\pihat^*_{i+1}>\cdots > \pihat^*_n$ are actually the consecutive numbers $[1,n-1]$ or $[2,n]$. 

Another thing to notice is that if $n$ is odd, $i = (n+1)/2$ and if $n$ is even, $i = n/2$ or $n/2 + 1$, that is, the $*$ is in the middle of $\pihat^*$. This is needed in order for $\pihat^*$ to be cyclic. Obtaining $\pi$ from $\pihat^*$, we get that $\pi = 1n2(n-1)3(n-2)\cdots$ or $\pi = n1(n-1)2(n-2)3\cdots$, so the last position is $\pi_n = (n+1)/2$ if $n$ is odd, $\pi_n = n/2 + 1$ if $n$ is even and starts with $\pi_1 =1$, and $\pi_n = n/2$ if $n$ is even and starts with $\pi_1 =n$.

Suppose $\pihat^*_{i-1} = j$ and $\pihat^*_{i+1} = j-1$. Then this implies that in $\pi$, we have for some $a$, $\pi_a\pi_{a+1} = (i-1)j$, for some $b$, $\pi_b\pi_{b+1} = (i+1)(j-1)$ and $\pi_n = i$. If $n$ is odd and $\pi_1 = 1$ (so that $\pihat^*$ contains numbers from $[2,n-1]$), then $j-1 = (n+1)/2$, but this is also equal to $i$. Therefore, $j-1 = i$, and $j = i+1$. This implies that $b = n-1$ and $a = n-2$. So we have $\pi_{n-2}\pi_{n-1}\pi_n = (i-1)(i+1)i$, which is a 132 pattern. By ($\dagger$) of Theorem \ref{thm:desc allowed}, we must let $s_{n-2} = 0$ and $s_{n-1} = 1$. The rest of $s_1\cdots s_{n-1}$ is forced after this. If $n$ is odd and $\pi_1 = n$ (so that $\pihat^*$ contains numbers from $[1,n]$), then $j = i$ and we end up with $\pi_{n-2}\pi_{n-1}\pi_n = (i+1)(i-1)i$ and so by ($\dagger$) of Theorem \ref{thm:desc allowed}, we must let $s_{n-2} = 1$ and $s_{n-1} = 0$ and as before, the rest of $s_1\cdots s_{n-1}$ is forced after this. Similarly for when $n$ is even.

{\it Case 4.} Now suppose we have either $\pihat^*_1 = *$ or $\pihat^*_n = *$ and that $\pihat^*$ is decreasing (no ascents). Because $\pihat^*$ is cyclic, $\pihat^*$ is forced to contain numbers $[1, n] \setminus \{(n+1)/2\}$ when $n$ is odd,  $[1, n] \setminus \{(n)/2\}$ when $n$ is even and $\pihat^*_n = *$, and $[1, n] \setminus \{(n)/2-1\}$ when $n$ is even and $\pihat^*_1 = *$.  So there are certainly on two such permutations, the one you get from when $\pihat^*_1 = *$ and from when $\pihat^*_n = *$. It remains to show that there are only two possible $s_1\cdots s_{n-1}$ for each. 

Notice that we must have either (4) or (5) of the definition of a $*$-$\sigma$-segmentation. If $\pihat^*_1 = *$, then we also have $\pihat^*_2 = n$ and $\pihat^*_n = 1$. Therefore we must have that $e_1 = n$ or $e_1 \leq 1$. Therefore, $s_1\cdots s_{n-1} = 0^\infty$ or $1^\infty$ (if $e_1 = 0$, then $s_n = 1$ and if $e_1 = 1$, then $s_n = 0$, but these won't change the rest of the word). Similarly, if $\pihat^*_n = *$, then we also have $\pihat^*_{n-1} = 1$ and $\pihat^*_1 = n$. Therefore we must have that $e_1 = 0$ or $e_1 \geq n-1$. Therefore, $s_1\cdots s_{n-1} = 0^\infty$ or $1^\infty$. 

Therefore, there are only two patterns that have two possibilities for $s_1\cdots s_{n-1}$ and all others determine $s_1\cdots s_{n-1}$ uniquely.
\end{proof}

\begin{corollary}
For $n\geq 4$, $|\Allow_n(\rs_2)|>|\Allow_n(\Sigma_2)|$. When $n\leq 3$, they are equal.
\end{corollary}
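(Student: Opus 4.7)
The plan is to simply combine the preceding theorem with the known enumeration of allowed patterns of the binary shift. By the theorem immediately above, we have $|\Allow_n(\rs_2)| = a_n + 2^{n-2} - 2$ for all $n \geq 3$, and by the result from \cite{Elishifts} cited earlier in this section, $|\Allow_n(\Sigma_2)| = a_n$. Therefore
\[
|\Allow_n(\rs_2)| - |\Allow_n(\Sigma_2)| = 2^{n-2} - 2
\]
for $n \geq 3$, and the sign of the right-hand side immediately distinguishes the cases: the difference is positive precisely when $n - 2 > 1$, i.e. $n \geq 4$, and equals zero when $n = 3$.

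For $n \leq 2$, I would handle the small cases directly rather than appealing to the formula (which was only proven for $n \geq 3$). When $n = 1$ the only permutation is the singleton, realized by every map, and when $n = 2$ both permutations $12$ and $21$ are realized by both $\Sigma_2$ and $\rs_2$ via constant and alternating words, so the two quantities agree.

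The only genuine content lies in the comparison step, which is a one-line calculation, so there is no significant obstacle here; the entire work has been carried out by the preceding theorem.
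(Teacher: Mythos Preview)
Your proof is correct and follows the same approach as the paper: invoke the preceding theorem for $|\Allow_n(\rs_2)|$ and the identity $|\Allow_n(\Sigma_2)| = a_n$ from \cite{Elishifts}, then compare. Your explicit treatment of the cases $n\le 2$ is a welcome addition, since the paper's theorem only applies for $n\ge 3$ and the paper itself does not spell out those small cases.
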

\begin{proof}
This follows from the theorem above and that $|\Allow_n(\Sigma_2)| = a_n$. 
\end{proof}

Based on numerical evidence, we conjecture the following generalization of the above corollary.
\begin{conjecture}
For $n\geq 3$, $|\Allow_n(\Sigma_k)|\leq |\Allow_n(\rs_k)|$. 
\end{conjecture}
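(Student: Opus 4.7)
The plan is to prove the inequality by constructing an injection $\Phi : \Allow_n(\Sigma_k) \hookrightarrow \Allow_n(\rs_k)$, generalizing the $k=2$ computation just completed. By Theorem~\ref{thm:desc allowed}, $\pi \in \Allow_n(\Sigma_k)$ iff $\hat\pi^*$ admits a $*$-$+^k$-segmentation, which since the signature is all-positive reduces to the non-$*$ entries of $\hat\pi^*$ having at most $k-1$ descents subject to conditions (b)--(c); the obstruction $(\dagger)$ is automatic for $\sigma=+^k$. On the other side, $\pi \in \Allow_n(\rs_k)$ iff $\hat\pi^*$ admits a $*$-$-^k$-segmentation (at most $k-1$ ascents, plus (d)--(e)) \emph{and} $(\dagger)$ holds. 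So the content of the conjecture is that the ``extra room'' given up by trading descents for ascents and requiring $(\dagger)$ still leaves at least as many allowed patterns.

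To define $\Phi$, given $\pi \in \Allow_n(\Sigma_k)$ with a $*$-$+^k$-segmentation $0 = e_0 \leq \cdots \leq e_k = n$ and associated $\pi$-monotone prefix $s_1 \cdots s_{n-1} \in \{0, \ldots, k-1\}^{n-1}$, extend this prefix to an infinite word $\tilde s \in \W_k$ using the extremal $\prec_\sigma$-words for $\sigma = -^k$ in place of $\sigma = +^k$: namely $\tilde s = s_1\cdots s_{n-1}W_\sigma$ when $\pi_n = n$, $\tilde s = s_1\cdots s_{n-1}w_\sigma$ when $\pi_n = 1$, and in the middle case $1 < \pi_n < n$ the two-candidate construction $s^{(1)}/s^{(2)}$ of Lemmas~\ref{lem:p primitive}--\ref{lem:s exists} adapted to the $\rs_k$ setting. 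Set $\Phi(\pi) := \Pat(\tilde s, \rs_k, n)$; by construction $\Phi(\pi) \in \Allow_n(\rs_k)$. The intuition is that the same prefix, when the ranks of the suffixes $\tilde s_{[i,\infty)}$ are taken under $\prec_{-^k}$ rather than under $\lex$, determines a different but still legitimate permutation.

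Injectivity should follow by recovering $s_1\cdots s_{n-1}$ from $\Phi(\pi)$ (the $\Phi(\pi)$-monotone prefix is essentially unique, in the spirit of Lemma~\ref{lem: all but one}) and then recovering $\pi$ from this prefix. The main obstacle will be the interaction with condition $(\dagger)$: the natural extension $\tilde s$ may violate $(\dagger)$ under $\rs_k$ and force a backup choice of suffix, and one must check that no two distinct $\pi \in \Allow_n(\Sigma_k)$ are forced to the same backup $\tilde s$, which would collapse patterns. Tracking this case analysis (paralleling the $s^{(1)}$ vs.\ $s^{(2)}$ case split in the proof of Theorem~\ref{thm:desc allowed}) across the four subcases of $(\sigma_0,\sigma_{k-1})$ is the delicate part. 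If the direct injection proves too intricate, a fallback is a counting argument: lower-bound $|\Allow_n(\rs_k)|$ by adapting the allowed-intervals framework of Theorems~\ref{thm:bounds} and~\ref{thm:tent bounds}, and compare with the closed-form recurrence for $|\Allow_n(\Sigma_k)|$ from the preceding $k$-shift theorem to show that the former dominates for all $n\geq 3$.
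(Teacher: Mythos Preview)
This statement is a \emph{conjecture} in the paper, presented without proof and supported only by numerical evidence. There is no argument in the paper to compare your proposal against.

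As to the proposal itself: it is a plan, not a proof, and you explicitly flag this (``the main obstacle will be\ldots'', ``if the direct injection proves too intricate, a fallback is\ldots''). Beyond the obstacles you name, there are more basic gaps. First, $\Phi$ is not well-defined as a function of $\pi$: a pattern $\pi\in\Allow_n(\Sigma_k)$ can admit several $*$-$+^k$-segmentations (this multiplicity is precisely what the $k$-shift recurrence in the paper is counting), each yielding a different prefix $s_1\cdots s_{n-1}$ and hence a different $\tilde s$. You would need to fix a canonical choice and argue it is compatible with everything downstream. Second, the assertion ``by construction $\Phi(\pi)\in\Allow_n(\rs_k)$'' is not automatic: the prefix $s_1\cdots s_{n-1}$ was built to be $\pi$-monotone for the \emph{lexicographic} order, and appending a $\prec_{-^k}$-extremal tail does not in itself make $\tilde s$ behave well under $\prec_{-^k}$; you have not shown that $\Pat(\tilde s,\rs_k,n)$ is even defined, let alone that the analogues of Lemmas~\ref{lem:p primitive}--\ref{lem:pat is pi} go through when the prefix and the tail are tuned to two different signatures. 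Third, for injectivity you propose to recover $s_1\cdots s_{n-1}$ as ``the $\Phi(\pi)$-monotone prefix'', but that prefix is governed by a $*$-$-^k$-segmentation of $\widehat{\Phi(\pi)}^{\,*}$, not of $\hat\pi^*$, and there is no a priori reason these words coincide.

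None of this rules out an injection argument, but as written it is a heuristic rather than a proof, and the paper treats the inequality as open.
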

Numerical evidence suggests an even stronger conjecture, namely that for any $\sigma \in \{+,-\}^k$, which is not equal to $+^k$ or $-^k$, we have $$|\Allow_n(\Sigma_\sigma)|\leq |\Allow_n(\Sigma_k)|\leq |\Allow_n(\rs_k)|.$$


\begin{thebibliography}{}

\bibitem{AAR} M. Albert, M. Atkinson and N. Ru{\v{s}}kuc, Regular closed sets of permutations, {\it Theoret. Comput. Sci.} 306 (2003), 85--100.

\bibitem{Amigosigned} J.M. Amig\'o, The ordinal structure of the signed shift transformations, {\it Internat. J. Bifur. Chaos Appl. Sci. Engrg.} 19 (2009), 3311--3327.

\bibitem{Amigobook} J.M. Amig\'o, {\it Permutation complexity in dynamical systems}, Springer Series in Synergetics, Springer-Verlag, Berlin, 2010.

\bibitem{AEK} J.M. Amig\'o, S. Elizalde and M. Kennel, Forbidden patterns and shift systems, {\it J. Combin. Theory Ser. A} 115 (2008), 485--504.

\bibitem{AZS} J.M. Amig\'o, S. Zambrano and M.A.F. Sanju\'an, True and false forbidden patterns in deterministic and random dynamics, {\it Europhys. Lett.} 79 (2007), 50001.

\bibitem{AZS2} J.M. Amig\'o, S. Zambrano and M.A.F. Sanju\'an, Detecting determinism in time series with ordinal patterns: a comparative study, {\it Internat. J. Bifur. Chaos Appl. Sci. Engrg.} 20 (2010), 2915--2924.

\bibitem{AE} K. Archer and S. Elizalde,  Cyclic permutations realized by signed shifts, \textit{J. Comb.} 5 (2014), 1-30.

%
%\bibitem{Atk} M. Atkinson, Restricted permutations, {\it Discrete Math.} 195 (1999), 27--38.
%
%\bibitem{ARM} M. Atkinson, M. Murphy and N. Ru{\v{s}}kuc, Partially well-ordered closed sets of permutations, {\it Order} 19 (2002), 101--113.

\bibitem{BKP}  C. Bandt, G. Keller and B. Pompe, Entropy of interval maps via permutations, {\it Nonlinearity} 15 (2002), 1595--1602.

%\bibitem{BCMM} C. Bernhardt, E. Coven, M. Misiurewicz and I. Mulvey, Comparing Periodic Orbits of Maps of the Interval, {\it Trans. Amer. Math. Soc.} 333 (1992), 701--707.
%
%\bibitem{Blo} L. Block, Periodic Orbits of Mappings of the Interval, {\it Trans. Amer. Math. Soc.} 254 (1979), 391--398.
%
%\bibitem{Bobok} J. Bobok and M. Kuchta, X-minimal patterns and a generalization of Sharkovskii's theorem, {\it Fund. Math.} 156 (1998), 33--66.

\bibitem{Elishifts} S. Elizalde, The number of permutations realized by a shift, {\it SIAM J. Discrete Math.} 23 (2009), 765--786.

\bibitem{Elicyc} S. Elizalde, Descent sets of cyclic permutations, \textit{Adv. in Appl. Math.} 47 (2011), 688--709.

\bibitem{Elibeta} S. Elizalde, Permutations and $\beta$-shifts, {\it J. Combin. Theory Ser. A} 118 (2011), 2474--2497.

\bibitem{Eliu} S. Elizalde and Y. Liu, On basic forbidden patterns of functions, {\it Discrete Appl. Math.} 159 (2011), 1207--1216.

%\bibitem{Gan} T. Gannon, The cyclic structure of unimodal permutations, {\it Discrete Math.} 237 (2001), 149--161.
%
\bibitem{GesReut} I. Gessel and C. Reutenauer, Counting Permutations with Given Cycle Structure and Descent Set. {\it J. Combin. Theory Ser. A} 64 (1993), 189-215.
%
%\bibitem{Jun} I. Jungreis, Some Results on the Sarkovskii Partial Ordering of Permutations, {\it Trans. Amer. Math. Soc.} 325 (1991), 319--344.
%
%\bibitem{MTh} J. Milnor, W. Thurston, On Iterated Maps of the Interval, {\it Lecture Notes in Math.} 1342, Springer, Berlin, 1988, pp. 465--563.
%
%\bibitem{Sarko} A.N. Sarkovskii, Coexistence of cycles of a continuous map of a line into itself, {\it Ukrainian Math. J.} 16 (1964), 61--71.
%
%\bibitem{Sta} R. Stanley, personal comunication.
%
%\bibitem{Thi} J.-Y. Thibon, The cycle enumerator of unimodal permutations, {\it Ann. Comb.} 5 (2001), 493--500.
%
%\bibitem{WR} A. Weiss, T.D. Rogers, The number of orientation reversing cycles in the quadratic map,
%{\it CMS Conf. Proc.} 8, Amer. Math. Soc., Providence, RI, 1987, pp. 703--711.


\end{thebibliography}
\end{document}